\crefname{theorem}{Theorem}{Theorems}
\crefname{thm}{Theorem}{Theorems}
\crefname{mainthm}{Theorem}{Theorems}
\crefname{lemma}{Lemma}{Lemmas}
\crefname{lem}{Lemma}{Lemmas}
\crefname{remark}{Remark}{Remarks}
\crefname{claim}{Claim}{Claims}
\crefname{subclaim}{Sub-claim}{Sub-claims}
\crefname{prop}{Proposition}{Propositions}
\crefname{proposition}{Proposition}{Propositions}
\crefname{defn}{Definition}{Definitions}
\crefname{corollary}{Corollary}{Corollaries}
\crefname{conjecture}{Conjecture}{Conjectures}
\crefname{question}{Question}{Questions}
\crefname{chapter}{Chapter}{Chapters}
\crefname{section}{Section}{Sections}
\crefname{figure}{Figure}{Figures}
\theoremstyle{plain}
\newtheorem{thm}{Theorem}
\newtheorem*{thm*}{Theorem}
\newtheorem{lemma}[thm]{Lemma}
\newtheorem{corollary}[thm]{Corollary}
\newtheorem{prop}[thm]{Proposition}
\theoremstyle{definition}
\theoremstyle{remark}
\renewcommand{\P}{{\bf P}}
\newcommand{\E}{{\bf E}}
\newcommand{\Z}{{\mathbb Z}}
\keywords{
asymptotic enumeration, 
digraph, 
infinitely divisible distribution, 
lattice points, 
permutahedron, 
renewal sequence, 
reverse renewal theorem, 
score sequence, 
tournament}
\subjclass[2010]{
05A16, 
05C20, 
05C30, 
11P21, 
51M20, 
52B05, 
60E07, 
60K05} 
\author[B. Kolesnik]{Brett Kolesnik}
\address{Department of Statistics, University of Oxford}
\email{brett.kolesnik@stats.ox.ac.uk}
\begin{document}

\title[The asymptotic number of score sequences]
{
The asymptotic number of score sequences
}

\begin{abstract}
A tournament on a graph is an orientation of its edges. 
The score sequence lists the in-degrees in non-decreasing order. 
Works by Winston and Kleitman (1983) and 
Kim and Pittel (2000) showed that 
the number $S_n$ of score sequences on the complete
graph $K_n$ satisfies 
$S_n=\Theta(4^n/n^{5/2})$. 
By combining 
a recent recurrence relation for $S_n$ in terms 
of the Erd\H{o}s--Ginzburg--Ziv numbers $N_n$
 with the 
limit theory for discrete 
infinitely divisible
distributions, we observe that 
$n^{5/2}S_n/4^n\to e^\lambda/2\sqrt{\pi}$, where 
$\lambda=\sum_{k=1}^\infty  N_k/k4^k$. 
This limit agrees numerically with the 
asymptotics of $S_n$ conjectured by Tak\'acs (1986). 
We also 
identify the asymptotic number 
of strong score sequences,   
and show that the number of 
irreducible subscores in a random 
score sequence converges in distribution to a 
shifted 
negative
binomial with parameters $r=2$ and $p=e^{-\lambda}$. 
\end{abstract}

\maketitle

\vspace{-0.15cm}

\section{Introduction}\label{S_intro}

A {\it tournament} on a graph $G$
is an orientation of its edges. Informally, 
each vertex is a team and each 
pair of vertices
joined by an edge plays a game. Afterwards, the edge 
is directed towards
the winner. The {\it score sequence} lists the total number 
of wins by each team in non-decreasing order.
For a survey, see, e.g., Harary and Moser
\cite{HM66} and Moon \cite{Moo68}.

Let $S_n$ denote the number of score sequences
on the complete graph $K_n$. 
Calculations related to $S_n$ appear
in the literature as early as MacMahon~\cite{Mac20}. 
The investigation of its asymptotics began
with the unpublished work of Erd{\H{o}}s and Moser (see \cite{Moo68}) which 
showed  
$c/n^{9/2}\le S_n/4^n\le C/n^{3/2}$. 
Winston and Kleitman \cite{WK83} 
(cf.\ Kleitman \cite{Kle70})
proved 
$S_n/4^n\ge c/n^{5/2}$, and argued that  
an upper bound of the same order follows, supposing that the largest 
$q$-Catalan number 
is $O(4^n/n^3)$. 
This fact was finally verified by 
Kim and Pittel \cite{KP00}, and hence $S_n=\Theta(4^n/n^{5/2})$, 
as conjectured by Moser \cite{Mos71}. 

In this note, we observe the following. 
Let $N_n$ denote the number of subsets of $\{1,2,\ldots 2n-1\}$
that sum to a multiple of $n$. 

\begin{thm}\label{T_main}
As $n\to\infty$, we have that 
\begin{equation}\label{E_asySn}
S_n\sim 
\frac{N_n}{n}
\exp\left(\sum_{k=1}^\infty\frac{N_k}{k 4^k}\right).
\end{equation}
\end{thm}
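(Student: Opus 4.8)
The plan is to recast \cref{T_main} as a statement about the point probabilities of a discrete compound Poisson law, and then to apply the ``one big jump'' principle for infinitely divisible distributions. The input I would take as given is the recently established recurrence for $S_n$ in terms of the numbers $N_n$ referred to above, which in generating-function form reads
\begin{equation*}
\sum_{n\ge0}S_nx^n=\exp\left(\sum_{n\ge1}\frac{N_n}{n}x^n\right)\qquad\Longleftrightarrow\qquad nS_n=\sum_{k=1}^nN_kS_{n-k},\ \ S_0=1.
\end{equation*}
Since the Erd\H{o}s--Ginzburg--Ziv numbers satisfy $N_n\sim\binom{2n}{n}/(2n)\sim 4^n/(2\sqrt{\pi}\,n^{3/2})$ --- a fact I would extract either from the roots-of-unity formula for $N_n$ (whose dominant term is $\binom{2n-1}{n}/n$) together with Stirling, or from a local central limit theorem for the sum of a uniformly random $n$-subset of $\{1,\dots,2n-1\}$ --- the series $\lambda=\sum_{n\ge1}N_n/(n4^n)$ converges, the power series above has radius of convergence exactly $1/4$, and $\sum_{n\ge0}S_n/4^n=e^\lambda$.

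Next I would renormalise. Put $p_n:=S_n/(4^ne^\lambda)$ and $r_n:=N_n/(n4^n)$; then $r_n\ge0$, $\sum_nr_n=\lambda<\infty$, and
\begin{equation*}
\sum_{n\ge0}p_nx^n=\exp\left(\sum_{n\ge1}r_n(x^n-1)\right).
\end{equation*}
Thus $(p_n)$ is a probability distribution on $\N$ --- a discrete compound Poisson, hence infinitely divisible, law with L\'evy measure $(r_n)$ --- and $S_n=4^ne^\lambda p_n$, so \cref{T_main} is exactly the claim that $p_n\sim r_n$: the mass placed at $n$ is asymptotic to the L\'evy mass at $n$.

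To prove $p_n\sim r_n$ I would argue as in the limit theory for subexponential infinitely divisible laws. With $\rho:=\lambda$ and $\pi_n:=r_n/\rho$ one has the compound Poisson representation $p_n=e^{-\rho}\sum_{k\ge0}(\rho^k/k!)\,\pi^{*k}_n$. Because $r_n$, hence $\pi_n$, is regularly varying of index $-5/2$, the sequence $(\pi_n)$ is long-tailed and locally subexponential, so $\pi^{*k}_n\sim k\pi_n$ for each fixed $k$, while a Kesten-type bound supplies, for every $\eps>0$, a constant $C_\eps$ with $\pi^{*k}_n\le C_\eps(1+\eps)^k\pi_n$ for all $n$. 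Dominated convergence in $k$ then gives $p_n/\pi_n\to e^{-\rho}\sum_{k\ge0}(\rho^k/k!)\,k=\rho$, i.e. $p_n\sim\rho\pi_n=r_n$, whence $S_n=4^ne^\lambda p_n\sim(N_n/n)e^\lambda$. Equivalently --- and this variant also delivers the asymptotic count of strong score sequences --- one can go through a reverse renewal theorem: by the decomposition of a score sequence into strong components, $(S_n/4^n)$ is the renewal sequence of the defective increment distribution $(s_n/4^n)$, where $\sum_ns_nx^n=1-\exp(-\sum_{n\ge1}N_nx^n/n)$ has total mass $1-e^{-\lambda}$ at $x=1/4$ and $s_n$ is the number of strong score sequences; a first big-jump estimate (now applied to $\exp(-\sum_nr_nx^n)$) gives $s_n\sim(N_n/n)e^{-\lambda}$, and the subexponential renewal theorem transfers this to $S_n/4^n\sim e^{2\lambda}(s_n/4^n)$, recovering the same limit.

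The main obstacle is the passage from the L\'evy measure to the point probabilities in the last step. One must verify the local subexponentiality of $(r_n)$ with care --- long-tailedness, $\pi^{*2}_n\sim2\pi_n$, and the uniformity underlying the Kesten bound --- and justify interchanging the limit with the infinite Poisson sum over the number of jumps; this is precisely where the cited limit theory for discrete infinitely divisible distributions does the work. The remaining ingredients, namely the generating-function identity and the asymptotics $N_n\sim 4^n/(2\sqrt{\pi}\,n^{3/2})$ --- the latter also pinning down the equivalent numerical form $n^{5/2}S_n/4^n\to e^\lambda/(2\sqrt{\pi})$ --- are comparatively routine.
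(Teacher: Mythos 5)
Your proposal is correct and takes essentially the same route as the paper: both convert the recurrence $nS_n=\sum_{k=1}^n N_kS_{n-k}$ into the statement that $p_n=e^{-\lambda}S_n/4^n$ is a compound Poisson law with L\'evy measure $r_n=N_n/(n4^n)$, and then transfer the regularly varying ($\sim \tfrac{1}{2\sqrt{\pi}}n^{-5/2}$) asymptotics of $r_n$ to $p_n$ using the limit theory for discrete infinitely divisible laws. The only real difference is that the paper quotes the Embrechts--Hawkes theorem \cite{EH82} and verifies its two hypotheses ($\beta_{n+1}\sim\beta_n$ and $\beta_n^{*2}\sim 2\lambda\beta_n$) by a direct convolution split, whereas you sketch a proof of that transfer itself (local subexponentiality, a Kesten-type bound, and dominated convergence over the Poisson number of jumps), which is in effect a re-derivation of the cited theorem.
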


Here, as usual, $f(n)\sim g(n)$ means that $f(n)/g(n)\to1$, 
as $n\to\infty$. 
The quantity in the exponential will appear often
throughout, so we 
put
\[
\lambda=\sum_{k=1}^\infty\frac{N_k}{k 4^k}.
\]

We note that, from a geometric point of view, 
$S_n$ is the number 
of non-decreasing lattice points in the permutahedron
$\Pi_{n-1}$, the convex hull of all permutations
of the vector $v_n=(0,1,\ldots,n-1)$. 
Note that $v_n$ is the score sequence for the tournament
on $K_n$ 
in which each team $i$ wins against all teams $j<i$
of smaller index 
(and loses against all other teams $j>i$). 
This polytope is equivalently obtained as the graphical zonotope of $K_n$, 
more specifically, the (translated) Minkowski sum 
of line segments 
$v_n+\sum_{i<j}[0,e_i-e_j]$, where
$e_i$ are standard basis vectors. 
See, e.g., 
Rado \cite{R52}, Landau \cite{Lan53}, Stanley \cite{S80}, 
and Ziegler \cite{Z95} for more details on these connections. 
See also, e.g., Postnikov \cite{Pos09} (and references therein) for results
on counting lattice points in the permutahedron
and other polytopes. 
Let us also note that $S_n$ is the number of lattice points in the partitioned permutahedron 
$P_{A_{n-1}}([n-1])$ in the recent work of 
Horiguchi, Masuda,  Shareshian and Song \cite{HMSS21}.

\begin{figure}[h]
\centering
\includegraphics[scale=0.3]{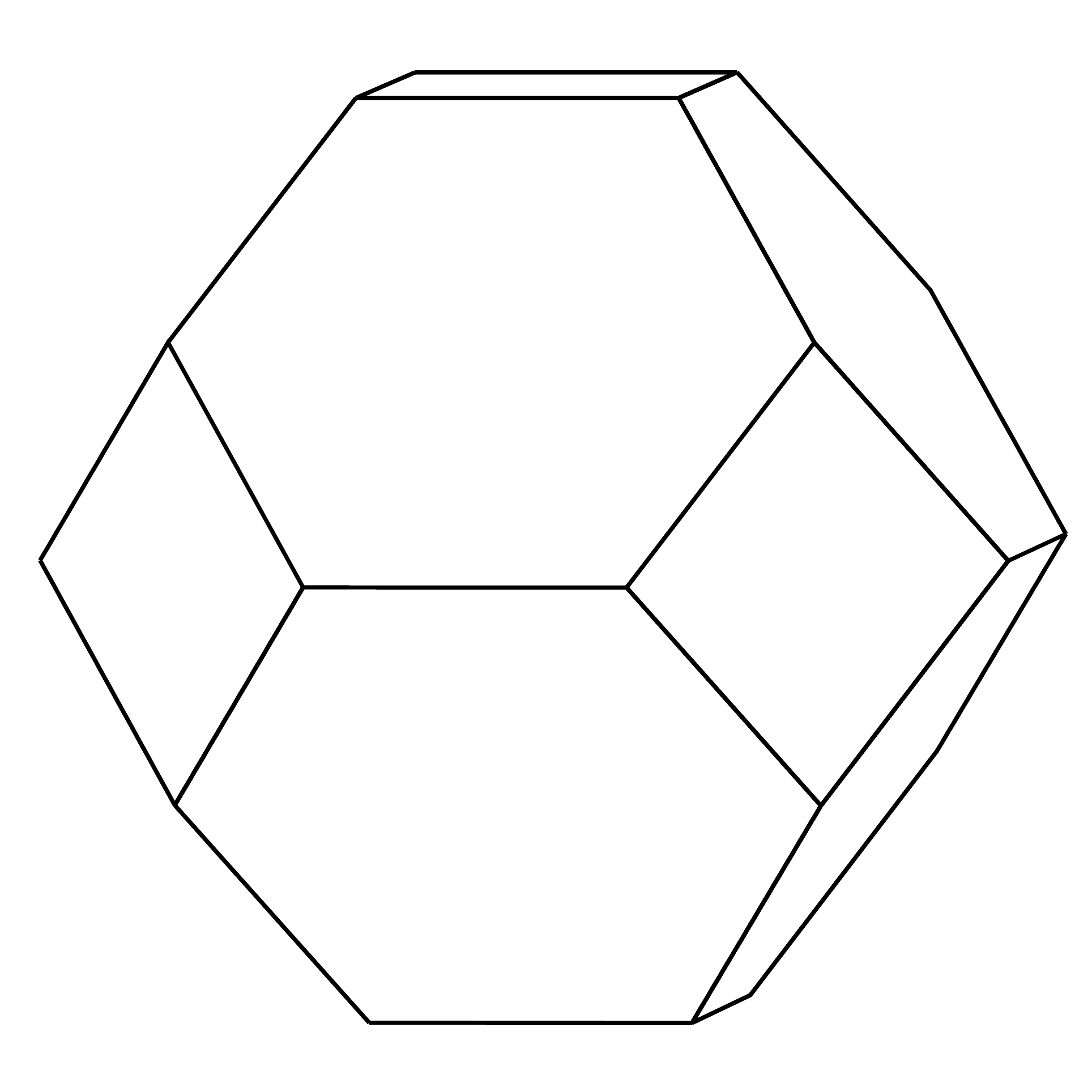}
\caption{
The permutahedron $\Pi_3$. 
}
\label{F_perm}
\end{figure}

An intriguing 
connection between $S_n$ and $N_n$ 
was conjectured by Hanna \cite{A000571} 
and recently proved by Claesson, Dukes, Frankl\'in and Stef\'ansson 
\cite[Corollary 12]{CDFS22}. 
As it turns out,   
\begin{equation}\label{E_CDFS22}
nS_n=\sum_{k=1}^n  N_k S_{n-k}. 
\end{equation}
This follows as a corollary of the main result in \cite[Theorem 4]{CDFS22} 
which establishes a relationship between generating functions
related to the numbers $S_n$, $N_n$ and 
``pointed''
score sequences (with a distinguished index). 
Another consequence of their main result is a beautiful connection
between score sequences and the Catalan numbers, see \cite[Corollary 11]{CDFS22}. 

The potential for a relationship between $S_n$ and $N_n$ is perhaps
not so obvious at first blush, however, let us 
remark here that, as observed in \cite[p.\ 212]{WK83}, 
each score sequence $s\in \Z^n$ is associated
with a subset $\{s_1+1,s_2+2,\ldots,s_n+n\}$
of $\{1,2,\ldots,2n-1\}$, which sums to $n^2$. 
Informally, if the sequence $s$ is drawn as a ``bar graph''
consisting of upward and rightward unit steps, then this 
subset contains the ``times'' at which 
the walk takes a rightward step, thus ``completing a bar.''

The relationship \eqref{E_CDFS22} 
implies that the sequence $\{S_0,S_1,\ldots\}$ is 
{\it infinitely divisible} (see \cref{S_ID} below). 
The theory of infinitely divisible distributions  
began with de Finetti \cite{deF29}. 
See, e.g., Wright~\cite{Wri67a}, Hawkes and Jenkins~\cite{HJ78}, 
van Harn~\cite{vHa78}, 
Embrechts and Hawkes~\cite{EH82}
and Steutel and van Harn~\cite{SvH04} 
for background and results in the discrete case
which are pertinent to the current article. 
Often,
when two sequences are related in this way, 
asymptotic information can be transferred between
them. 
The purpose of this note is to 
point out that the asymptotics of $S_n$ can be obtained
from those of $N_n$, via the limit theory developed in 
\cite{HJ78,EH82}. See \cref{S_proofs} for the proof
of \cref{T_main}.

\subsection{Tak\'acs' conjecture}
A well-known result of 
Erd{\H{o}}s, Ginzburg and Ziv \cite{EGZ61} states that
{\it every} set $I$ of $2n-1$ integers has a subset of size $n$
which sums to a multiple of $n$. 
Alekseyev \cite{Ale08} (cf.\ Chern \cite{Che19}) showed, 
in the special case $I=\{1,2,\ldots 2n-1\}$ that the integers
are consecutive, that
the number $N_n$ of such subsets is equal to 
\begin{equation}\label{E_Ale08}
N_n
=\sum_{k=1}^{n}\frac{(-1)^{n+d}}{2n}{2d\choose d},
\end{equation}
where $d={\rm gcd}(n,k)$. 
In \cref{S_constant}, we 
use this together with \cref{T_main} to 
obtain the following 
asymptotics, in line with those conjectured by 
Tak\'acs \cite[p.~136]{Tak86} (cf.\ Kotesovec \cite{A000571}).

\begin{corollary} 
\label{C_main}
As $n\to\infty$, we have that 
\[
\frac{n^{5/2}S_n}{4^n}
\to 
\frac{e^\lambda}{2\sqrt{\pi}}=
0.392\ldots.
\]
\end{corollary}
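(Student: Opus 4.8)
The plan is to feed the exact formula \eqref{E_Ale08} of Alekseyev into \cref{T_main}. Since \cref{T_main} gives $S_n\sim (N_n/n)\,e^\lambda$, we have $n^{5/2}S_n/4^n\sim n^{3/2}(N_n/4^n)\,e^\lambda$, so the corollary is equivalent to the single asymptotic
\[
N_n\sim\frac{4^n}{2\sqrt{\pi}\,n^{3/2}},
\]
together with the numerical evaluation $e^\lambda/(2\sqrt{\pi})=0.392\ldots$. Everything of substance is already contained in \cref{T_main}; what remains is elementary.

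To establish the displayed asymptotic for $N_n$, first I would rewrite \eqref{E_Ale08} by collecting the indices $k\in\{1,\ldots,n\}$ according to the value of $d=\gcd(n,k)$. For each divisor $d\mid n$ there are exactly $\varphi(n/d)$ such indices, where $\varphi$ is Euler's totient function, so
\[
N_n=\frac{1}{2n}\sum_{d\mid n}(-1)^{n+d}\,\varphi(n/d)\binom{2d}{d}.
\]
The term $d=n$ equals $\frac{1}{2n}\binom{2n}{n}$, which by Stirling's formula is $\sim 4^n/(2\sqrt{\pi}\,n^{3/2})$. Every other term has $d\le n/2$; since $\binom{2d}{d}$ is increasing in $d$ it is at most $\binom{2\lfloor n/2\rfloor}{\lfloor n/2\rfloor}=O(2^n/\sqrt{n})$ by Stirling again, while $\sum_{d\mid n,\,d<n}\varphi(n/d)=n-1$. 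Hence the remaining terms contribute $O(2^n/\sqrt{n})$ in total, which is $o(4^n/n^{3/2})$; this proves $N_n\sim 4^n/(2\sqrt{\pi}\,n^{3/2})$ and therefore $n^{5/2}S_n/4^n\to e^\lambda/(2\sqrt{\pi})$.

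It then remains to pin down the constant. For this I would compute $\lambda=\sum_{k\ge1}N_k/(k4^k)$ numerically, evaluating the first several $N_k$ exactly from \eqref{E_Ale08} (namely $N_1=1$, $N_2=1$, $N_3=4$, $N_4=9$, $N_5=26,\ldots$) and controlling the tail via the bound $N_k=\frac{1}{2k}\binom{2k}{k}+O(2^k/\sqrt{k})=O(4^k/k^{3/2})$ obtained above, so that $\sum_{k>K}N_k/(k4^k)=O(K^{-3/2})$. Taking $K$ moderately large yields $\lambda=0.330\ldots$, whence $e^\lambda/(2\sqrt{\pi})=0.392\ldots$.

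I do not anticipate a real obstacle: the only point that needs a moment's thought is that the alternating sum in \eqref{E_Ale08} involves no delicate cancellation --- the $d=n$ term dominates every other term by an exponential factor $\Theta(2^{n})$, so the error analysis above is immediate --- and the most tedious part is merely carrying the tail estimate far enough to fix the leading digits of $0.392\ldots$.
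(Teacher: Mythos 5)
Your proposal is correct and follows essentially the same route as the paper: feed Alekseyev's formula \eqref{E_Ale08} into \cref{T_main}, isolate the dominant $d=n$ central-binomial term (the paper bounds the remaining terms by $\tfrac{n}{2}\binom{n}{\lfloor n/2\rfloor}$ rather than grouping divisors with $\varphi(n/d)$, a cosmetic difference), and evaluate $\lambda$ numerically with a tail estimate. The only point to tighten is that pinning down the digits $0.392\ldots$ needs an explicit tail constant rather than an unspecified $O(K^{-3/2})$, which the paper supplies in \cref{L_tech} from the same Stirling bounds you use.
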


\subsection{Irreducible subscores}\label{S_irr}

In a natural way, any tournament (or score sequence) 
can be decomposed into a series of irreducible 
parts (see, e.g., Moon and Moser \cite{MM62} and Wright \cite{W70}). 
Landau \cite{Lan53}  showed that 
a sequence
$s\in\Z^n$ is a tournament score sequence
on $K_n$ if and only if 
$\sum_{i=1}^k s_n\ge {k\choose2}$
with equality when $k=n$. In other words, if and only if 
$s$ is majorized (see, e.g., 
Marshall, Olkin and Arnold~\cite{MOA11}) by $v_n$. 
We let $S_{n,m}$ denote the number of 
score sequences 
with exactly $m$ {\it irreducible subscores},
that is, points $k$ at which $\sum_{i=1}^k s_i={k\choose 2}$
takes the smallest possible value. Hence $S_n=\sum_{m=1}^n S_{n,m}$.

In \cref{S_ID}, we show that 
 $\{S_0,S_1,\ldots\}$ is a {\it renewal sequence}.
In \cref{S_exp}, we use this to deduce that, asymptotically,  
the exponential term in \eqref{E_asySn} has the following 
probabilistic interpretation. 

\begin{corollary}\label{T_irr}
Let ${\mathcal I}_n$ denote the number of irreducible 
subscores 
in a uniformly random 
tournament score sequence on $K_n$.  
Then, as $n\to\infty$, the expected inverse number
of such subscores
satisfies 
\[
\E\left(\frac{1}{{\mathcal I}_n}\right)
=\sum_{m=1}^n\frac{1}{m}\frac{S_{n,m}}{S_n}
=\frac{N_n}{nS_n}\to
e^{-\lambda}
=0.718\ldots.
\]
\end{corollary}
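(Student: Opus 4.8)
The plan is to reduce \cref{T_irr} to the single exact identity
\[
\sum_{m=1}^n\frac{S_{n,m}}{m}=\frac{N_n}{n},
\]
after which the corollary is immediate: the first displayed equality in the statement is just $\E(1/\mathcal I_n)=\sum_m \tfrac1m\,\P(\mathcal I_n=m)$ together with $\P(\mathcal I_n=m)=S_{n,m}/S_n$; the identity above gives the second equality $\E(1/\mathcal I_n)=N_n/(nS_n)$; and the limit then follows from \cref{T_main}, since \eqref{E_asySn} says $S_n\sim (N_n/n)e^\lambda$ and hence $N_n/(nS_n)\to e^{-\lambda}$. The numerical value $e^{-\lambda}=0.718\ldots$ comes from evaluating $\lambda=\sum_{k\ge1}N_k/(k4^k)$ via Alekseyev's formula \eqref{E_Ale08}, exactly as in the proof of \cref{C_main}.

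To establish the identity I would pass to ordinary generating functions. Put $S(x)=\sum_{n\ge0}S_nx^n$, $N(x)=\sum_{n\ge1}N_nx^n$, and $T(x)=\sum_{n\ge1}T_nx^n$, where $T_n$ counts the \emph{irreducible} score sequences on $K_n$, i.e.\ those $s$ majorized by $v_n$ with $\sum_{i\le k}s_i=\binom k2$ only at $k=n$. The unique decomposition of a score sequence into its irreducible subscores (Moon and Moser \cite{MM62}, Wright \cite{W70}) underlies the renewal structure of \cref{S_ID}: if $0=k_0<k_1<\cdots<k_m=n$ are the indices with $\sum_{i\le k_j}s_i=\binom{k_j}{2}$, then cutting $s$ at each $k_j$ exhibits the blocks $(s_{k_{j-1}+1}-k_{j-1},\ldots,s_{k_j}-k_{j-1})$ as irreducible score sequences on $K_{k_j-k_{j-1}}$, and conversely any ordered list of $m$ irreducible blocks reassembles to a unique score sequence with exactly $m$ irreducible subscores. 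Hence $S_{n,m}=[x^n]T(x)^m$ and $S(x)=\sum_{m\ge0}T(x)^m=1/(1-T(x))$. On the other hand, multiplying \eqref{E_CDFS22} by $x^n$ and summing over $n\ge1$ gives $xS'(x)=N(x)S(x)$, so $(\log S)'(x)=N(x)/x$, and integrating from $0$ (using $S(0)=S_0=1$) yields $\log S(x)=\sum_{k\ge1}(N_k/k)x^k$. Comparing this with $\log S(x)=-\log(1-T(x))=\sum_{m\ge1}T(x)^m/m$ and extracting the coefficient of $x^n$ gives $\sum_{m=1}^n S_{n,m}/m=N_n/n$, as required.

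There is no genuinely hard step: once \eqref{E_CDFS22} and \cref{T_main} are in hand, the corollary is a formal consequence of the renewal/irreducible structure of \cref{S_ID}. The one point to handle carefully — the ``main obstacle'', such as it is — is verifying that the irreducible decomposition really is a bijection compatible with multiplying generating functions, namely that cutting a non-decreasing $s$ majorized by $v_n$ at an index $k$ with $\sum_{i\le k}s_i=\binom k2$ produces $(s_1,\ldots,s_k)$ as a score sequence on $K_k$ and $(s_{k+1}-k,\ldots,s_n-k)$ as a score sequence on $K_{n-k}$ (each team of index $>k$ having beaten every team of index $\le k$), and that iterating this over all such $k$ is well defined and invertible; this follows from Landau's majorization criterion \cite{Lan53} and is presumably already in place from \cref{S_ID}. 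Everything else is bookkeeping and the single appeal to \cref{T_main}.
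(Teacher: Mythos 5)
Your proposal is correct and follows essentially the same route as the paper: the paper isolates the exact identity $N_n=n\sum_{m=1}^n S_{n,m}/m$ as a proposition, proved by the same renewal factorization $S(x)=1/(1-S_1(x))$ coming from the irreducible decomposition together with the recurrence \eqref{E_CDFS22} (the paper differentiates $\log$ of the renewal relation and invokes uniqueness of the log transform, whereas you integrate the recurrence to get $\log S(x)=\sum_k(N_k/k)x^k$ and compare with $-\log(1-T(x))$ --- the same computation in the opposite direction), and then concludes exactly as you do from \cref{T_main} and the numerical bounds on $\lambda$.
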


\subsection{Strong score sequences}\label{S_irr}

Let $S_{n,1}$ denote the number of {\it strong (irreducible) score sequences}. 
The reason for the name is that a score sequence $s$ of a strongly connected
tournament (in which each pair of vertices are in a directed cycle) is irreducible, 
that is, $\sum_{i=1}^k s_i>{k\choose 2}$
for all $k<n$. 
In other words, $s$ is strictly majorized by $v_n$. 
See, e.g., \cite[Theorem 9]{HM66}. 

Combining the ``reverse renewal theorem'' 
in Alexander and Berger \cite{AB16}
with the results above, 
we obtain the following. 

\begin{corollary}\label{T_irr2}
As $n\to\infty$, we have that 
\[
\frac{S_{n,1}}{S_n}
\to 
e^{-2\lambda}
= 0.515\ldots, 
\]
and so 
\[
\frac{n^{5/2} S_{n,1}}{4^n}
\to 
\frac{e^{-\lambda}}{2\sqrt{\pi}}
=0.202\ldots.
\]
\end{corollary}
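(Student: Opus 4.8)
The plan is to pass to generating functions and then invoke the reverse renewal theorem of Alexander and Berger. Put $S(x)=\sum_{n\ge 0}S_n x^n$, and let $U(x)=\sum_{n\ge 1}S_{n,1}x^n$ be the generating function of the strong (irreducible) score sequences. Since every score sequence decomposes uniquely as an ordered concatenation of irreducible subscores, we have the renewal-type identity $S(x)=1/(1-U(x))$, i.e.\ $U(x)=1-1/S(x)$; concretely $S_{n,1}$ is the first-return sequence whose renewal sequence is $\{S_0,S_1,\ldots\}$ (this is exactly the renewal structure established in \cref{S_ID}). From \cref{C_main} we know $S_n\sim \tfrac{e^\lambda}{2\sqrt\pi}\,4^n n^{-5/2}$; equivalently, setting $a_n=S_n/4^n$, the sequence $\{a_n\}$ is a renewal sequence (after the infinite-divisibility normalization) with $a_n\sim \tfrac{e^\lambda}{2\sqrt\pi} n^{-5/2}$, so it is a renewal sequence with regularly varying tails of index $5/2$ and with a \emph{finite} limit $a_\infty:=\lim a_n\cdot$(something)$\ne 0$? — no: in fact $a_n\to 0$, so $\{a_n\}$ is a \emph{transient} renewal sequence, and $\sum_n a_n = S(1/4) = e^\lambda$ by \cref{T_main} combined with the product formula for $S$ derived from \eqref{E_CDFS22}.

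The key input is then the Alexander--Berger reverse renewal theorem: for a transient renewal sequence $u_n$ with $\sum u_n=1/(1-F)$ where $F=\sum f_n<1$ is the return probability, if $u_n$ is regularly varying with index $\alpha>1$ then the first-return probabilities satisfy $f_n\sim (1-F)^2 u_n$. Here, after dividing through by the total mass, our renewal sequence is $\hat u_n:=a_n/e^{-\lambda}\cdot$? Let me instead state it directly for $\{a_n\}$: $\{a_n\}$ is a (sub-probability) renewal sequence in the sense that $\sum_{n\ge 1} (4^{-n}S_{n,1}) \cdot$(geometric factor) $=1-1/\sum a_n = 1-e^{-\lambda}$, so the ``return probability'' is $F=1-e^{-\lambda}$ and $1-F=e^{-\lambda}$. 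The reverse renewal theorem then yields $4^{-n}S_{n,1}\sim (1-F)^2 a_n = e^{-2\lambda} a_n$, i.e.\ $S_{n,1}/S_n\to e^{-2\lambda}$. Combining with \cref{C_main} gives $n^{5/2}S_{n,1}/4^n\to e^{-2\lambda}\cdot \tfrac{e^\lambda}{2\sqrt\pi}=\tfrac{e^{-\lambda}}{2\sqrt\pi}$, and plugging in $e^{-\lambda}=0.718\ldots$ yields the numerical values $0.515\ldots$ and $0.202\ldots$.

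The main obstacle is bookkeeping the normalization so that the hypotheses of the Alexander--Berger theorem are met verbatim. Their theorem is stated for probability (or explicitly sub-probability) renewal sequences with a specified regular-variation hypothesis on $u_n$ (index $>1$), and one must check that $a_n=S_n/4^n$, after the infinitely-divisible renewal normalization from \cref{S_ID}, is exactly of this form with the return generating function $U(x)$ analytic at $x=1/4$ and $U(1/4)=1-e^{-\lambda}<1$ (transience). One also needs the regular-variation / eventual-monotonicity conditions on $a_n$; these follow from \cref{C_main} since $a_n\sim c\,n^{-5/2}$ is regularly varying of index $5/2>1$. A minor secondary point is verifying that the ``strong score sequences'' $S_{n,1}$ coincide with the first-return sequence of this renewal process — this is precisely the irreducible-decomposition fact recorded in \cref{S_irr} (cf.\ \cite[Theorem 9]{HM66}), so no new work is needed there. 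Once the normalization is pinned down, the rest is a direct substitution of \cref{C_main} and the numerical value of $\lambda$.
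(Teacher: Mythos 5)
Your proposal is correct and follows essentially the same route as the paper: normalize $u_n=S_n/4^n$, $f_n=S_{n,1}/4^n$, use the irreducible-decomposition renewal identity $U(s)=1/(1-F(s))$ with $U(1)=S(1/4)=e^\lambda$ so that the process is transient with $f_\infty=e^{-\lambda}$, and apply the Alexander--Berger reverse renewal theorem together with \cref{C_main}. The normalization bookkeeping you worry about resolves exactly as you guess (no analyticity at $x=1/4$ beyond $F(1)=1-e^{-\lambda}<1$ is needed, and regular variation of $u_n$ follows from \cref{C_main}), so nothing is missing.
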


These asymptotics agree with those conjectured
by Kotesovec \cite{A351822}, as discussed recently in 
Stockmeyer \cite[Section 6]{S22}.
Note that, for all sufficiently large $n$, more than half of all score sequences are strong. 

\subsection{Limiting distribution}

With these asymptotics at hand, 
we can then apply the powerful local limit theory of 
Chover, Ney and Wainger
\cite{CNW73} to, more generally, obtain the asymptotic
distribution of the number of irreducible subscores
in a random score sequence. 

\begin{corollary}\label{T_Snm}
Let ${\mathcal I}_n$ denote the number of irreducible 
subscores 
in a uniformly random 
tournament score sequence on $K_n$. 
Then
\[
{\mathcal I}_n
\xrightarrow{d}
1+{\mathcal N}_\lambda, 
\]
where 
${\mathcal N}_\lambda$ is a negative binomial (Pascal) random variable
with parameters $r=2$ and $p=e^{-\lambda}$. 
That is, for any $m\ge1$, 
\[
\P({\mathcal I}_n=m)=
\frac{S_{n,m}}{S_n}
\to 
m(1-e^{-\lambda})^{m-1}e^{-2\lambda}
=\P({\mathcal N}_\lambda=m-1),
\]
as $n\to\infty$. In particular, 
\[
\frac{n^{5/2} S_{n,m}}{4^n}
\to 
\frac{m(1-e^{-\lambda})^{m-1}e^{-\lambda}}{2\sqrt{\pi}}.
\]
\end{corollary}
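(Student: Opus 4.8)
The plan is to recognise $S_{n,m}$ as a convolution coefficient and then feed the tail rates of \cref{C_main} and \cref{T_irr2} into the local limit theory of \cite{CNW73}. I would begin from the irreducible decomposition: by Landau's criterion \cite{Lan53}, if $s$ is a score sequence on $K_n$ and $k<n$ is an irreducible subscore, then $(s_1,\ldots,s_k)$ is a score sequence on $K_k$, $(s_{k+1}-k,\ldots,s_n-k)$ is a score sequence on $K_{n-k}$, and the irreducible subscores of these two pieces are exactly those of $s$ lying below, respectively above, $k$. Splitting at every irreducible subscore thus writes each $s$ uniquely as a concatenation of strong score sequences, the number of parts being the number of irreducible subscores. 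Hence, with $F(x)=\sum_{n\ge1}S_{n,1}x^n$ and $S(x)=\sum_{n\ge0}S_nx^n$, we have $S_{n,m}=[x^n]F(x)^m$ and $S(x)=1/(1-F(x))$ (the renewal structure of \cref{S_ID}). Writing $f_n=S_{n,1}/4^n$, $a_n=S_n/4^n$ and $(f^{*m})_n=[x^n]F(x/4)^m=S_{n,m}/4^n$, this gives
\[
\P(\mathcal I_n=m)=\frac{S_{n,m}}{S_n}=\frac{(f^{*m})_n}{a_n},\qquad a_n=\sum_{j\ge0}(f^{*j})_n .
\]
Since \eqref{E_CDFS22} is equivalent to $xS'(x)=\bigl(\sum_{k\ge1}N_kx^k\bigr)S(x)$, integrating with $S(0)=1$ yields $S(x)=\exp\bigl(\sum_{k\ge1}N_kx^k/k\bigr)$, so $S(1/4)=e^{\lambda}$ and $\mu:=\sum_{n\ge1}f_n=F(1/4)=1-1/S(1/4)=1-e^{-\lambda}<1$.

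For the analytic input, \cref{C_main} and \cref{T_irr2} give the polynomial rate $f_n\sim\frac{e^{-\lambda}}{2\sqrt{\pi}}\,n^{-5/2}$, so the normalised mass function $\bar f_n:=f_n/\mu$ on $\{1,2,\ldots\}$ is regularly varying of index $-5/2$. Mass functions of this type are long-tailed and locally subexponential, meaning $(\bar f*\bar f)_n\sim 2\,\bar f_n$ — precisely the regime treated in \cite{CNW73} (cf.\ \cite{EH82}) — and a routine induction then gives, for each fixed $m\ge1$,
\[
(\bar f^{*m})_n\sim m\,\bar f_n,\qquad\text{equivalently}\qquad (f^{*m})_n\sim m\,\mu^{m-1}f_n .
\]

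To conclude, \cref{T_irr2} gives $f_n/a_n=S_{n,1}/S_n\to e^{-2\lambda}$, hence
\[
\P(\mathcal I_n=m)=\frac{(f^{*m})_n}{f_n}\cdot\frac{f_n}{a_n}\longrightarrow m\,\mu^{m-1}e^{-2\lambda}=m(1-e^{-\lambda})^{m-1}e^{-2\lambda},
\]
which equals $\P(\mathcal N_\lambda=m-1)$, since a $\mathrm{NegBin}(2,e^{-\lambda})$ variable has $\P(\mathcal N_\lambda=j)=(j+1)(1-e^{-\lambda})^{j}e^{-2\lambda}$. As these limits sum to $1$, pointwise convergence of the mass functions yields $\mathcal I_n\xrightarrow{d}1+\mathcal N_\lambda$ (indeed in total variation, by Scheffé's lemma), and the final display of the statement follows on multiplying by $n^{5/2}S_n/4^n\to e^{\lambda}/2\sqrt{\pi}$ from \cref{C_main}.

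The step I expect to be most delicate is the analytic one: invoking the correct local limit statement from \cite{CNW73} — that $(\bar f^{*m})_n\sim m\,\bar f_n$ for locally subexponential $\bar f$ — and verifying that $n^{-5/2}$-type mass functions belong to that class. If one prefers to recover $a_n/f_n\to e^{2\lambda}$ directly from $a_n=\sum_j(f^{*j})_n$ rather than quoting \cref{T_irr2}, one also needs a Kesten-type uniform bound $(\bar f^{*j})_n\le K_\eps(1+\eps)^j\bar f_n$ to interchange $\lim_n$ with $\sum_{j\ge1}$; the combinatorial identity $S_{n,m}=[x^n]F(x)^m$ and the evaluation $S(1/4)=e^{\lambda}$, by contrast, are routine.
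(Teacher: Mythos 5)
Your proposal is correct and follows essentially the same route as the paper: it uses the renewal structure $S_{n,m}=[x^n][S_1(x)]^m$ together with the asymptotics $S_{n,1}/4^n\sim e^{-\lambda}n^{-5/2}/(2\sqrt{\pi})$ from \cref{T_irr2}, feeds the normalized mass function into the Chover--Ney--Wainger local limit theorem to get $\mu_n^{*m}/\mu_n\to m$, and concludes via $S_{n,1}/S_n\to e^{-2\lambda}$ and \cref{C_main}. The only differences are cosmetic: you re-derive the renewal decomposition (the paper's \cref{P_ID}) and phrase the analytic input as local subexponentiality of regularly varying mass functions, where the paper simply verifies the ratio hypotheses of \cref{T_CNW73} directly.
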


We note that the mean 
\[
\E({\mathcal I}_n)\to2(1-e^{-\lambda})e^{\lambda}+1
=1.782\ldots
\]
and variance 
\[
{\bf V}({\mathcal I}_n)\to2(1-e^{-\lambda})e^{2\lambda}
=1.088\ldots.
\]
See \cref{F_ld}. 
Let us also note that, equivalently, ${\mathcal N}_\lambda$ can be expressed
as a compound random variable 
$\sum_{i=1}^N X_i$, where 
$N$ is Poisson with rate $-2\log(1-e^{-\lambda})$ and the $X_i$ 
are independent and identically logarithmically distributed as  
\[
\P(X_i=k)=-\frac{e^{-\lambda k}}{k\log(1-e^{-\lambda})},\quad
k\ge1.
\]

\begin{figure}[h]
\centering
\includegraphics[scale=0.35]{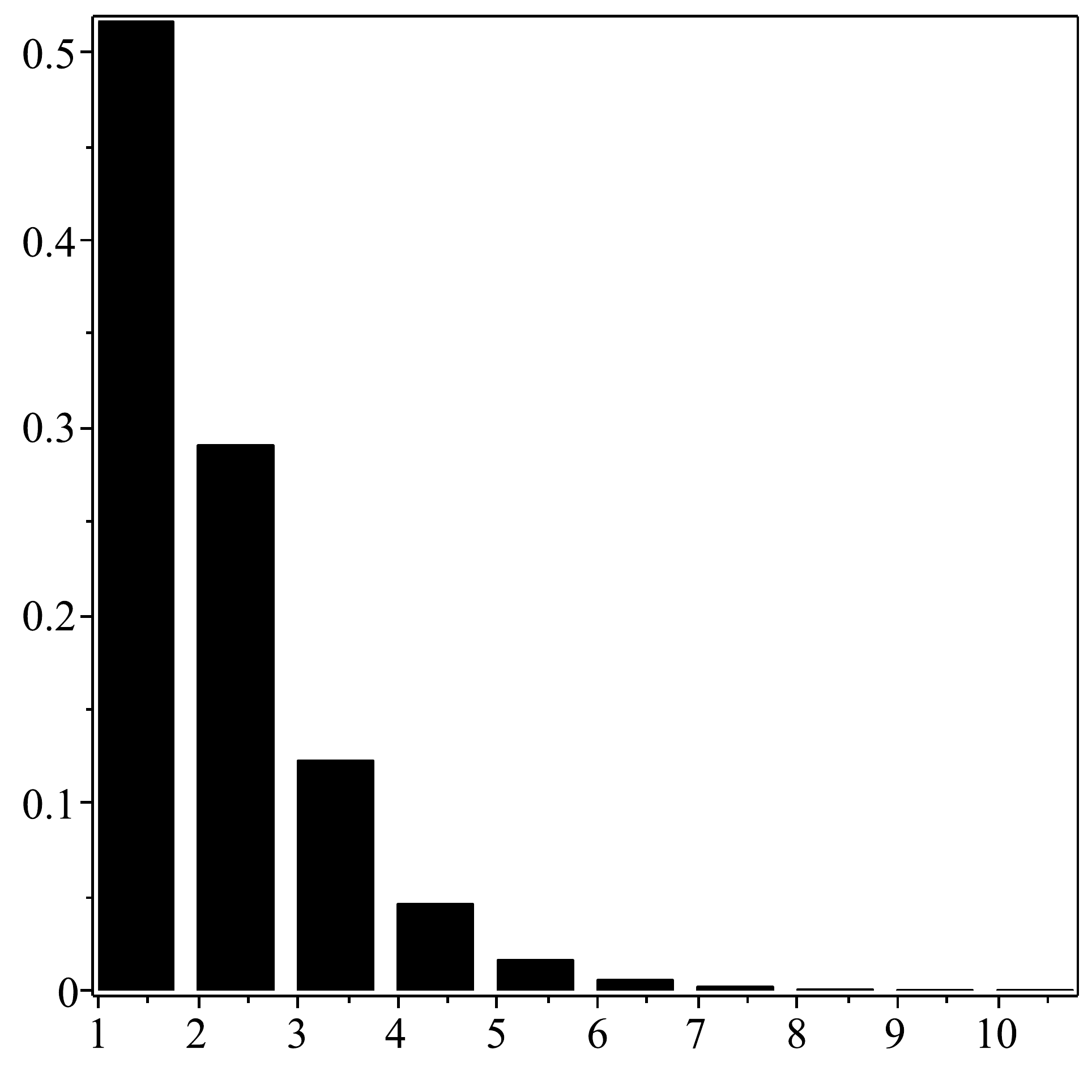}
\caption{
The limiting subscore distribution, negative
binomial with parameters $r=2$ and $p=e^{-\lambda}$.
}
\label{F_ld}
\end{figure}

\subsection{Acknowledgments}

We thank Jim Pitman and Mario Sanchez 
for helpful discussions.

\section{Infinite divisibility}
\label{S_ID}

Following \cite{HJ78}, we call 
a positive sequence $\{1=a_0,a_1,\ldots\}$ {\it infinitely divisible} 
if for any integer $r\ge1$ there is a non-negative 
sequence $\{b_0,b_1,\dots\}$ such that 
$a_n=b_n^{*r}$, where $b_n^{*r}$ is the $r$th {\it convolution
power}, defined inductively by $b_n^{*1}=b_n$ and 
$b_n^{*r}=\sum_{k=0}^n b_{k}b_{n-k}^{*(r-1)}$, for $r>1$. 
In \cite[Theorem~2.1]{HJ78} (cf., e.g., Katti~\cite{Kat67}, 
Steutel~\cite{Ste71} and van Harn~\cite{vHa78}) 
it is shown that 
a positive sequence $\{1=a_0,a_1,\ldots\}$  is 
infinitely divisible if and only if 
\begin{equation}\label{E_HJ78}
na_n=\sum_{k=1}^n \hat a_k a_{n-k}
\end{equation}
for some other non-negative 
sequence $\{0=\hat a_0,\hat a_1,\ldots\}$.
We call $\hat a_n$  
the {\it log transform} 
of $a_n$, since their generating functions 
$A(x)=\sum_{n=0}^\infty a_n x^n$ and $\hat A(x)=\sum_{n=0}^\infty \hat a_n x^n$
satisfy 
\[
\hat A(x)=x\frac{d}{dx}\log A(z), 
\]
and so,  
\[
A(x)=
\exp\left(\sum_{k=1}^\infty \hat a_kx^k/k\right).
\]

As discussed above, 
it was recently proved in \cite{CDFS22}
that log the transform of $S_n$ is $\hat S_n=N_n$.
We note that, in \cite{CDFS22}, $\hat A(x)$ is called the 
``logarithmic pointing'' of $A(x)$
and it is observed  that, 
when \eqref{E_HJ78} holds, 
a certain closed-form expression 
for $a_n$ can be obtained in terms of the $\hat a_k$. 
In \cite[Corollary~14]{CDFS22}, it is shown that 
\[
S_n=\frac{1}{n!}\sum_{\pi\in {\rm Sym}(n)}\prod_{\ell\in C(\pi)}
N_\ell,
\]
where ${\rm Sym}(n)$ is the symmetric group on 
$\{1,2,\ldots,n\}$ and $C(\pi)$ is the sequence of 
cycle lengths in $\pi$. 
However, while this 
gives a closed-form expression for $S_n$, 
it is not clear to us
if the asymptotics of $S_n$ can be easily read off 
from this formula.

\section{Proofs}\label{S_proofs}

Our main result follows quite simply by the limit theorems in \cite{HJ78,EH82}. 

\begin{proof}[Proof of \cref{T_main}]
Put 
\[
\alpha_n=\frac{S_n}{4^n},\quad\quad \beta_n=\frac{N_n}{n4^n}.
\]
Then, by \eqref{E_CDFS22}, we obtain
\begin{equation}\label{E_EH82}
n\alpha_n=\sum_{k=1}^n k\beta_k\alpha_{n-k}.
\end{equation}

In \cite{EH82}  it is shown that, 
for sequences of this form, the asymptotics of 
$\beta_n$ can be transferred to $\alpha_n$,  
under the following conditions.

\begin{thm}[{\hspace{1sp}\cite[Theorem 1]{EH82}}]
\label{T_EH82}
Suppose that $\{1=\alpha_0,\alpha_1,\ldots\}$ and
$\{\beta_1,\beta_2,\ldots\}$ are positive sequences 
satisfying 
\eqref{E_EH82}
and that $\lambda=\sum_{k=1}^\infty \beta_k<\infty$. 
Put $\beta_0=0$. 
Then, if $\beta_{n+1}\sim \beta_n$ and 
$\beta_n^{*2}\sim 2\lambda \beta_n$, it follows that 
$\alpha_n\sim e^\lambda\beta_n$. 
\end{thm}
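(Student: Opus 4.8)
The plan is to prove Theorem~\ref{T_EH82} directly from the recursion \eqref{E_EH82}, working entirely at the level of generating functions and then extracting coefficient asymptotics via a renewal-type argument. Write $A(x)=\sum_{n\ge0}\alpha_n x^n$ and $B(x)=\sum_{k\ge1}\beta_k x^k$. Summing \eqref{E_EH82} against $x^n$ and recognizing the left side as $xA'(x)$ and the right side as $\big(xB'(x)\big)A(x)$, we get $xA'(x)=xB'(x)\,A(x)$, i.e. $A'(x)/A(x)=B'(x)$, and hence $A(x)=e^{B(x)}$ (the constant of integration is fixed by $\alpha_0=1$, $B(0)=0$). Since $\sum_k\beta_k=\lambda<\infty$, $B(x)$ converges on $[0,1]$ with $B(1)=\lambda$, so $A(x)$ converges on $[0,1]$ with $A(1)=e^\lambda$; in particular $\sum_n\alpha_n=e^\lambda<\infty$.

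Next I would set up the renewal structure that lets us transfer tail asymptotics from $\beta_n$ to $\alpha_n$. Normalize $p_k=\beta_k/\lambda$ so that $\{p_k\}_{k\ge1}$ is a probability distribution, and expand $A(x)=e^{B(x)}=e^{\lambda(P(x)-1)}\cdot e^\lambda$ where $P(x)=\sum_k p_kx^k$; thus $\alpha_n=e^\lambda\,q_n$, where $q_n$ is the $n$-th coefficient of the compound-Poisson generating function $e^{-\lambda}e^{\lambda P(x)}=\sum_{j\ge0}e^{-\lambda}\frac{\lambda^j}{j!}P(x)^j$, i.e. $q_n=\sum_{j\ge0}e^{-\lambda}\frac{\lambda^j}{j!}\,p_n^{*j}$. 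So it suffices to show $q_n\sim\beta_n=\lambda p_n$, since then $\alpha_n\sim e^\lambda\beta_n$ as claimed. The $j=1$ term of the sum for $q_n$ is exactly $e^{-\lambda}\lambda p_n$, which is $e^{-\lambda}\beta_n$, so what must be shown is that the full sum is asymptotically $e^\lambda$ times this leading term, i.e. $q_n\sim e^\lambda\cdot e^{-\lambda}\lambda p_n=\lambda p_n$.

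The heart of the matter is therefore a \emph{subexponential}-type estimate: under the hypotheses $\beta_{n+1}\sim\beta_n$ (long-tailedness of $p_k$) and $\beta_n^{*2}\sim2\lambda\beta_n$ (equivalently $p_n^{*2}\sim2p_n$, the defining subexponentiality relation), I would prove by induction on $j$ that $p_n^{*j}\sim j\,p_n$ as $n\to\infty$, for every fixed $j\ge1$. The inductive step uses $p_n^{*(j+1)}=\sum_{m}p_m^{*j}p_{n-m}$ together with the long-tail property to split the convolution sum at a slowly growing threshold and control the central and tail contributions; this is the standard subexponentiality lemma (see Chover--Ney--Wainger \cite{CNW73} or Embrechts--Goldie), and \cite[Theorem 1]{EH82} is precisely where this is carried out. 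Then, to pass from fixed-$j$ asymptotics to the infinite sum $q_n=\sum_{j\ge0}e^{-\lambda}\frac{\lambda^j}{j!}p_n^{*j}$, I would obtain a uniform domination $p_n^{*j}\le C(1+\delta)^j p_n$ valid for all $n,j$ (Kesten's bound for subexponential distributions) so that dominated convergence applies to $\sum_j e^{-\lambda}\frac{\lambda^j}{j!}\frac{p_n^{*j}}{p_n}\to\sum_j e^{-\lambda}\frac{\lambda^j}{j!}j=\lambda$, giving $q_n/p_n\to\lambda$ and hence $q_n\sim\lambda p_n=\beta_n$ and $\alpha_n\sim e^\lambda\beta_n$. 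The main obstacle is the uniform-in-$j$ Kesten bound needed to justify interchanging the limit with the infinite sum; establishing that bound (rather than the fixed-$j$ convolution asymptotics, which is routine induction) is the delicate step, and it is exactly the point at which one leans on the structural hypothesis $\beta_n^{*2}\sim2\lambda\beta_n$ most heavily.
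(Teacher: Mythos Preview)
The paper does not supply its own proof of \cref{T_EH82}: it is quoted verbatim as \cite[Theorem~1]{EH82} and used as a black box in the proof of \cref{T_main}. So there is no in-paper argument to compare your proposal against.

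That said, your sketch is a faithful outline of the Embrechts--Hawkes proof (itself in the lineage of Chover--Ney--Wainger \cite{CNW73}). The identification $A(x)=e^{B(x)}$ from \eqref{E_EH82}, the compound-Poisson rewriting $\alpha_n=e^\lambda\sum_{j\ge0}e^{-\lambda}\frac{\lambda^j}{j!}p_n^{*j}$, the inductive step $p_n^{*j}\sim jp_n$ from the subexponentiality hypothesis $p_n^{*2}\sim2p_n$, and the passage to the infinite sum via a Kesten-type uniform bound $p_n^{*j}\le C(1+\delta)^j p_n$ together with dominated convergence --- this is exactly the machinery in \cite{EH82}. You have also correctly flagged the one genuinely nontrivial step: the uniform-in-$j$ Kesten bound is what requires real work, whereas the fixed-$j$ asymptotics are routine. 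Nothing in your outline is wrong or missing at the level of a sketch; filling in the Kesten bound is the substance of the cited reference.
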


In order to apply this result, we first note that, in this instance, 
we have by \eqref{E_Ale08} (see \cref{L_tech}
in \cref{A_tech} below) 
that  
\[
\beta_n=\frac{1}{2\sqrt{\pi}}\frac{1}{n^{5/2}}\left(1+\Theta\left(\frac{1}{n}\right)\right).
\]
The condition $\beta_{n+1}\sim \beta_n$
is clear. 
Next, we note that 
\[
\frac{\beta_n^{*2}}{2\beta_n}\ge \sum_{k=1}^{\lfloor n/2\rfloor}\beta_k \frac{\beta_{n-k}}{\beta_n}
\ge(1+o(1))\sum_{k=1}^{\lfloor n/\log n\rfloor}\beta_k\to \lambda
\]
and 
\[
\frac{\beta_n^{*2}}{2\beta_n}\le 
\sum_{k=1}^{\lceil n/2\rceil}\beta_k \frac{\beta_{n-k}}{\beta_n}
\le (1+o(1))\sum_{k=1}^{\lfloor n/\log n\rfloor}\beta_k
+O\left(\frac{(\log n)^{5/2}}{n^{3/2}}\right)
\to\lambda.
\]
Hence the theorem applies, and 
\cref{T_main} follows.
\end{proof}

Note that, in our application of \cref{T_EH82}, 
the infinitely divisible distribution at play is 
\[
p_n=e^{-\lambda}\alpha_n=\frac{S_n}{4^n}\exp\left(-\sum_{k=1}^\infty\frac{N_k}{k4^k}\right), 
\quad n\ge0, 
\]
which we call the {\it tournament distribution}.
See \cref{F_td}. 
Recall that 
an infinitely divisible distribution on 
the non-negative integers is compound 
Poisson, that is, distributed as a random
sum 
$\sum_{i=1}^M X_i$ 
of $M$ independent and identically
distributed random variables $X_i$, 
which are moreover independent of $M$. 
See, e.g., Feller~\cite[Section XII.2, p.\ 290]{Fel68}. 
In this instance, $\P(X_i=n)=\beta_n/\lambda$
and $M$ is Poisson with rate $\lambda$.  

\begin{figure}[h]
\centering
\includegraphics[scale=0.35]{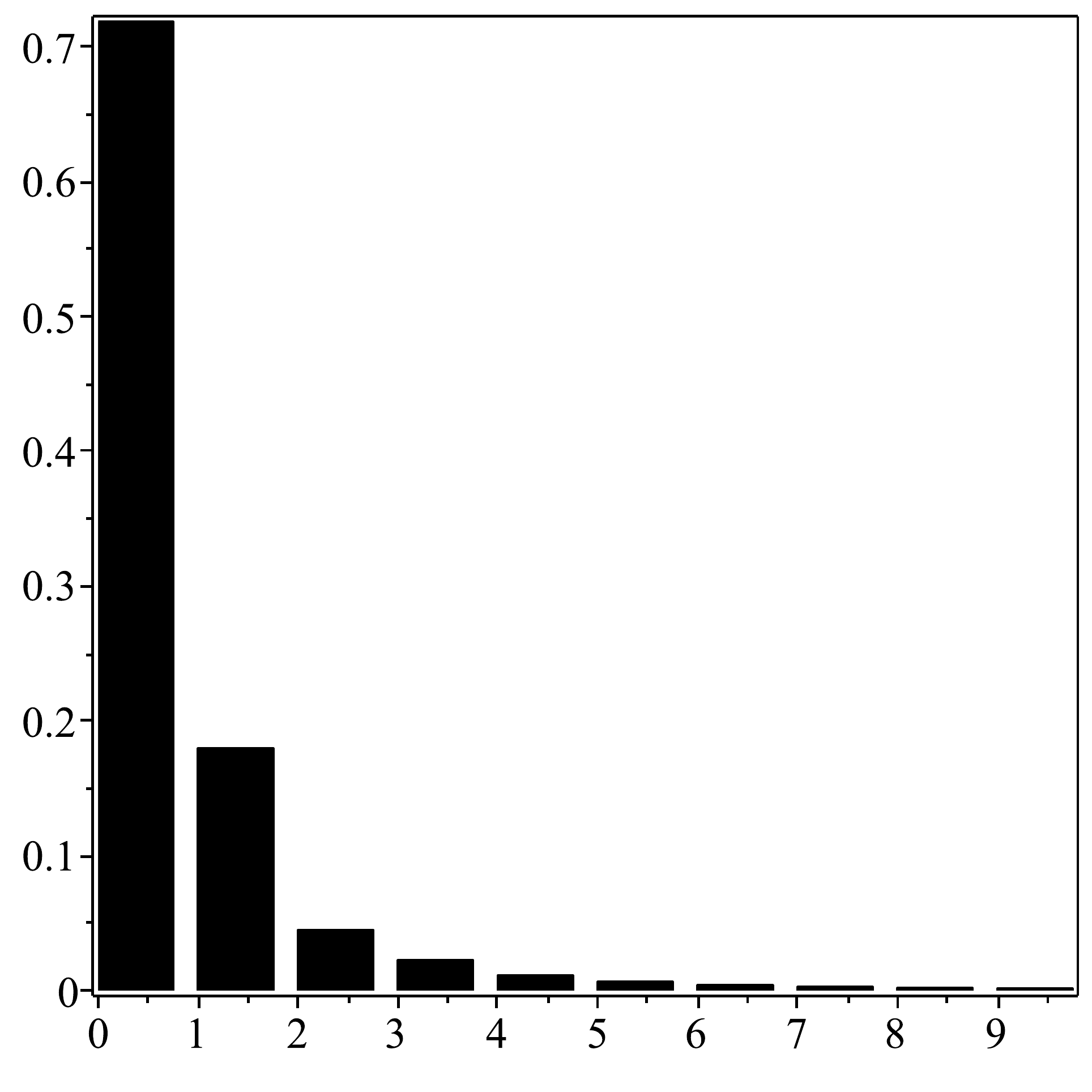}
\caption{
The infinitely divisible 
tournament distribution, $p_n=e^{-\lambda}S_n/4^n$.
}
\label{F_td}
\end{figure}

Let us also observe that, alternatively, \cref{T_main} can be proved
slightly more 
directly from \eqref{E_CDFS22}
using \cite[Theorem~3.1]{HJ78}. 
Indeed, put 
\[
a_n=\frac{S_n}{4^n},\quad\quad b_n=\frac{N_n}{4^n}.
\]
Then note that 
\[
na_n=\sum_{k=1}^n  b_k a_{n-k}
\]
and that, by \eqref{E_Ale08}, $b_n$ is regularly  
varying with index $p=-3/2$. 
See \cite{HJ78} for definitions and more details. 

\subsection{The constant}\label{S_constant}

In this section, we obtain a numerical approximation to the 
constant factor in the leading order asymptotics of $S_n$, verifying
the asymptotics conjectured in \cite{Tak86}. 

\begin{proof}[Proof of \cref{C_main}]
By \cref{T_main}, we have that 
\[
\frac{n^{5/2}S_n}{4^n}\sim
\frac{n^{3/2}N_n}{4^n}
e^\lambda.
\]
Next, we use the technical estimates in \cref{L_tech}, proved
in \cref{A_tech} below. By \eqref{E_Nn}, it follows that 
\[
\frac{n^{5/2}S_n}{4^n}\to
\frac{1}{2\sqrt{\pi}}
e^\lambda.
\]
Using values available at \cite{A145855}, we find that 
\[
\sum_{k=1}^{100}\frac{N_k}{k4^k}\doteq0.3300510246. 
\]
Hence, by \eqref{E_tailNn}, it follows that 
\begin{equation}\label{E_num}
0.330235
\le
\lambda 
\le 
0.330239, 
\end{equation}
and so, 
\[
\frac{n^{5/2}S_n}{4^n}\to 0.392\ldots, 
\]
as conjectured in \cite{Tak86}. 
\end{proof} 

\subsection{The exponential term}\label{S_exp}

Next, we obtain a probabilistic interpretation
of the exponential term in \eqref{E_asySn}, 
which is more 
closely related (on the face of it) to score sequences. 
Recall (see \cref{S_irr}) that $S_{n,m}$ is the number of score sequences
with exactly $m$ irreducible subscores. 

\begin{prop}\label{P_ID}
For all $n\ge1$, we have 
\[
N_n=n\sum_{m=1}^n\frac{1}{m}S_{n,m}.
\]
\end{prop}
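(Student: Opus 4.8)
The plan is to exploit the fact, established in \cref{S_ID}, that $N_n = \hat S_n$ is the log transform of $S_n$, so that by \eqref{E_HJ78} we have the identity
\[
N_n = n S_n - \sum_{k=1}^{n-1} N_k S_{n-k},
\]
equivalently the generating function relation $\hat S(x) = x\,\frac{d}{dx}\log S(x)$ where $S(x)=\sum_n S_n x^n$. The target formula $N_n = n\sum_{m=1}^n \frac{1}{m} S_{n,m}$ is then a statement about how $S_n$ decomposes according to the number of irreducible subscores. So the first step is to set up the \emph{bivariate} generating function $F(x,y) = \sum_{n\ge 1}\sum_{m\ge1} S_{n,m}\, x^n y^m$, tracking score sequences by size $n$ and number of irreducible subscores $m$.

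Next I would use the irreducible decomposition of score sequences (Moon--Moser \cite{MM62}, Wright \cite{W70}): every score sequence on $K_n$ with $m$ irreducible subscores is obtained by concatenating $m$ strong (irreducible) score sequences, with the appropriate shifts, so that $F(x,y) = \sum_{m\ge1} y^m T(x)^m = \frac{y\,T(x)}{1 - y\,T(x)}$, where $T(x) = \sum_{n\ge1} S_{n,1} x^n$ is the generating function of strong score sequences. (This is precisely the renewal structure invoked in \cref{S_ID}; one should double-check the exact form of the shift so that the product of generating functions is valid, but for the complete graph the standard block decomposition gives a clean product.) Setting $y=1$ recovers $S(x) - 1 = \sum_{n\ge1} S_n x^n = \frac{T(x)}{1-T(x)}$, i.e. $T(x) = 1 - 1/S(x)$.

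The key computation is then to extract $\sum_m \frac{1}{m} S_{n,m}$, which corresponds to $\int_0^1 F(x,y)\,\frac{dy}{y} = \sum_{m\ge1}\frac{1}{m} T(x)^m = -\log(1 - T(x)) = \log S(x)$. Therefore $\sum_{n\ge1}\bigl(\sum_{m=1}^n \tfrac1m S_{n,m}\bigr) x^n = \log S(x)$. Applying $x\frac{d}{dx}$ to both sides and comparing with $\hat S(x) = x\frac{d}{dx}\log S(x) = \sum_n N_n x^n$ gives $n\sum_{m=1}^n\frac1m S_{n,m} = N_n$, which is the claim.

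The main obstacle is justifying the product formula $F(x,y) = \frac{yT(x)}{1-yT(x)}$ rigorously, i.e. verifying that the irreducible-block decomposition of a score sequence on $K_n$ is genuinely a free concatenation whose generating function is a product — one must confirm that a sequence obtained by juxtaposing strong pieces of sizes $n_1,\dots,n_m$ (with each piece shifted by the cumulative sizes and score-offsets of the preceding pieces, as dictated by Landau's inequalities) is again a valid score sequence with exactly those $m$ irreducible subscores, and conversely that the decomposition is unique. Once that bijection is in hand, everything else is the elementary generating-function manipulation above; in fact the whole statement can be seen as a combinatorial identity equivalent to the infinite divisibility already recorded in \cref{S_ID}, so an alternative and perhaps shorter route is to argue directly that the renewal/compound-Poisson structure forces $\sum_m \frac1m S_{n,m}$ to be the coefficient of $x^n$ in $\log S(x)$, bypassing the bivariate function entirely.
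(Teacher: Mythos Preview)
Your proposal is correct and follows essentially the same route as the paper. The paper also establishes the renewal relation $S(x)=1/(1-S_1(x))$ from the irreducible decomposition (so $S_{n,m}=[x^n]S_1(x)^m$) and then combines it with the log-transform identity $\hat S(x)=x\,\tfrac{d}{dx}\log S(x)$; the only cosmetic difference is that the paper isolates the computation $\sum_{m\ge1}\tfrac1m B(x)^m=-\log(1-B(x))$ as a separate lemma (\cref{L_RS}) rather than obtaining it via your bivariate generating function and the integral $\int_0^1 y^{m-1}\,dy=\tfrac1m$.
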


For example, when $n=6$, we have 
$S_{6,1}=7$, $S_{6,2}=7$, $S_{6,3}=3$, $S_{6,4}=4$, $S_{6,5}=0$ and $S_{6,6}=1$.
Hence 
\[
6\sum_{m=1}^{6} \frac{S_{6,m}}{m}=
42+21+6+6+0+1=76.
\]
On the other hand, 
we obtain the same total sum by computing 
\[
N_6=\sum_{i=1}^{6}\frac{(-1)^{6+d}}{12}{2d\choose d}
=-\frac{1}{6}+\frac{1}{2}-\frac{5}{3}+\frac{1}{2}-\frac{1}{6}+77
=76. 
\]

Following \cite{HJ78}, 
we call a positive sequence $\{1=a_0,a_1,\ldots\}$ 
a {\it renewal sequence} if, for some other sequence $\{0=b_0,b_1,\ldots\}$, 
we have that 
\begin{equation}\label{E_RS}
A(x)=\frac{1}{1-B(x)}
\end{equation}
where $A(x)=\sum_{n=0}^\infty a_nx^n$ and 
$B(x)=\sum_{n=1}^\infty b_nx^n$ are their generating functions. 
As noted in \cite{HJ78} (p.\ 66), any such sequence is infinitely divisible. 
More specifically, let us note the following. 

\begin{lemma}[\hspace{1sp}\cite{HJ78}]
\label{L_RS}
Suppose that $\{1=a_0,a_1,\ldots\}$ is a renewal sequence 
satisfying \eqref{E_RS}. Then  $\{1=a_0,a_1,\ldots\}$
is infinitely divisible and 
the log transform of $a_n$ is 
\[
\hat a_n=n\sum_{m=1}^n\frac{1}{m}[x^n][B(x)^m]. 
\]
\end{lemma}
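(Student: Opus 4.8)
The plan is to compute $\log A(x)$ in closed form directly from \eqref{E_RS} and then read off the log transform coefficient by coefficient after applying the operator $x\,\tfrac{d}{dx}$. Since $B(x)=\sum_{n\ge1}b_nx^n$ has zero constant term, the composition $-\log\bigl(1-B(x)\bigr)$ is a well-defined formal power series, and the logarithmic expansion gives
\[
\log A(x)=-\log\bigl(1-B(x)\bigr)=\sum_{m=1}^\infty\frac{B(x)^m}{m},
\]
where the right-hand side converges $x$-adically because $B(x)^m$ has valuation at least $m$.

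Next I would differentiate. By the definition of the log transform (equivalently, by \eqref{E_HJ78}), $\hat A(x)=x\,\tfrac{d}{dx}\log A(x)$, so the display above gives
\[
\hat A(x)=\sum_{m=1}^\infty\frac{1}{m}\,x\frac{d}{dx}B(x)^m.
\]
Using that $x\,\tfrac{d}{dx}$ multiplies the coefficient of $x^n$ by $n$, i.e.\ $[x^n]\,xF'(x)=n\,[x^n]F(x)$ for any formal power series $F$, I would extract coefficients termwise to obtain
\[
\hat a_n=n\sum_{m=1}^\infty\frac{1}{m}[x^n]B(x)^m,
\]
and then observe that $B(x)^m$ is divisible by $x^m$, so $[x^n]B(x)^m=0$ whenever $m>n$; hence the sum truncates at $m=n$, yielding the claimed formula.

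For the infinite divisibility claim I would argue as follows. Since $b_n\ge0$, all coefficients of all powers $B(x)^m$ are non-negative, so $\hat a_n\ge0$ for every $n$ (and $\hat a_0=0$ since $\log A(0)=0$). Combined with the standing hypothesis that $\{1=a_0,a_1,\ldots\}$ is a positive sequence, together with the identity $\hat A(x)A(x)=xA'(x)$ — which is exactly \eqref{E_HJ78} — \cite[Theorem~2.1]{HJ78} then shows that $\{1=a_0,a_1,\ldots\}$ is infinitely divisible; this is also the remark of \cite{HJ78} (p.~66) already recorded above. I do not expect a genuine obstacle here: the argument is purely formal, and the only points needing routine care are the well-definedness of $\sum_m B(x)^m/m$ in the ring of formal power series and checking that the normalisation $x\,\tfrac{d}{dx}\log A(x)$ matches the paper's convention for the log transform in \eqref{E_HJ78}.
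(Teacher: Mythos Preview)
Your proposal is correct and follows essentially the same route as the paper: take logarithms in \eqref{E_RS}, expand $-\log(1-B(x))=\sum_{m\ge1}B(x)^m/m$, differentiate, and extract coefficients. The only cosmetic difference is that the paper multiplies through by $A(x)$ to write the relation as $A'(x)=A(x)\sum_{m\ge1}\tfrac{1}{m}\tfrac{d}{dx}B(x)^m$ and then compares coefficients to land directly in the convolution form \eqref{E_HJ78}, whereas you read off $\hat a_n$ straight from $\hat A(x)=x\,\tfrac{d}{dx}\log A(x)$; your version is slightly more streamlined and also makes the non-negativity of $\hat a_n$ (hence the infinite divisibility via \cite[Theorem~2.1]{HJ78}) explicit, which the paper's proof leaves implicit.
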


\begin{proof}
Taking logs on both sides of \eqref{E_RS} and then differentiating,
we find that \[
A'(x)=A(x)\sum_{m=1}^\infty \frac{1}{m}\frac{d}{dx}B(x)^{m}.
\]
Then, comparing coefficients, 
\[
na_n=\sum_{k=1}^n \left(k\sum_{m=1}^k\frac{1}{m}[x^k][B(x)^m]\right)a_{n-k}. \qedhere
\]
\end{proof}

\begin{proof}[Proof of \cref{P_ID}]
Setting $S_0=1$ and $S_{0,m}=0$, we have 
\begin{enumerate}
\item $S_n=\sum_{m=1}^n S_{n,m}$, for $n\ge1$, and
\item $S_{n,m}=\sum_{k=1}^n S_{k,1}S_{n-k,m-1}$,  
for $1<m\le n$. 
\end{enumerate}
Therefore, if we let $S(x)=\sum_n S_{n}x^n$ and $S_m(x)=\sum_n S_{n,m}x^n$ 
denote their generating functions, 
then by induction 
\[
S_m(x)=S_1(x)S_{m-1}(x)=[S_1(x)]^m,
\] 
and so 
\begin{equation}\label{E_RS}
S(x)=1+\sum_{m=1}^\infty S_m(x)=\frac{1}{1-S_1(x)}.
\end{equation}
Hence, the result follows by \eqref{E_CDFS22}
and  \cref{L_RS}, noting that 
$S_{n,m}=[x^n][S_1(x)]^m$. 
\end{proof}

\begin{proof}[Proof of \cref{T_irr}]
By \cref{T_main}
and \cref{P_ID}, 
\[
\E\left(\frac{1}{{\mathcal I}_n}\right)
=\sum_{m=1}^n\frac{1}{m}\frac{S_{n,m}}{S_n}
=\frac{N_n}{nS_n}\to
e^{-\lambda}.
\]
Using the numerics \eqref{E_num}, in the proof of \cref{C_main}
above, it follows that 
\[
\E\left(\frac{1}{{\mathcal I}_n}\right)\to 0.718\ldots, 
\]
as claimed. 
\end{proof}

\subsection{Strong scores}
In this section, we prove \cref{T_irr2}.
See, e.g., Feller \cite[Section XIII]{Fel68} for background
on discrete renewal theory. Following the notation there, 
we put  
\[
u_n=\frac{S_n}{4^n},\quad\quad f_n=\frac{S_{n,1}}{4^n}.
\] 
Then by \eqref{E_RS}
\begin{equation}\label{E_RS2}
U(s)=\frac{1}{1-F(s)}, 
\end{equation}
where 
$U(s)=\sum_{n=0}^\infty u_n s^n$
and $F(s)=\sum_{n=1}^\infty f_n s^n$
are the associated generating functions.  
Note that, by \eqref{E_CDFS22} and the numerics in 
\cref{T_irr}, we have that 
\begin{equation}\label{E_USN}
U(1)=S(1/4)=
e^\lambda 
>1.
\end{equation}
Hence $F(1)<1$. 
We extend the ``defective'' distribution 
$\{f_n:n\ge1\}$ to a probability distribution by setting 
 $f_\infty=1-F(1)>0$.

This relationship 
(in the notation of \cite{AB16}) 
corresponds to a ``transient'' renewal 
process 
\[
\tau =\{0=\tau_0,\tau_1,\tau_2,\ldots\}
\]
with independent and identically distributed
inter-arrival times $\Delta\tau_i$ 
distributed as $\P(\tau_1=n)=f_n$
and $\P(\tau_1=\infty)=f_\infty$.
By \eqref{E_RS2} it follows that $u_n=\P(n\in \tau)$.
Such a renewal process almost surely 
terminates (that is, eventually some $\Delta\tau_i=\infty$) after some finite amount of time, and  in 
that sense is transient. 

There is interest in the asymptotics of sequences 
$u_n$ and $f_n$
related by \eqref{E_RS}. See, e.g., the series of works by 
de Bruijn and Erd\H{o}s \cite{deBE51,deBE52,deBE53}. 
Most results in the literature give information about the asymptotics of $u_n$, 
assuming certain conditions on the regularity of $f_n$ (which, as discussed in \cite{AB16}, 
can in practice be difficult to verify). However, in our current situation, 
having already established the asymptotics of $S_n$, 
we want to go in the other direction. 
To this end, the 
following 
``reverse renewal theorem'' 
is just what we need. 

\begin{thm}[\hspace{1sp}{\cite[Theorem 1.4]{AB16}}]
\label{T_AB16}
If $u_n$ is regularly varying and $\tau$ is transient, then 
$f_n/u_n\to f_\infty^2$ as $n\to\infty$. 
\end{thm}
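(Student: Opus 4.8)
The plan is to invert \eqref{E_RS2} at the level of coefficients. Comparing coefficients in $U(s)\bigl(1-F(s)\bigr)=1$ gives, for $n\ge1$, the renewal equation $u_n=\sum_{k=1}^{n}f_ku_{n-k}$; since $u_0=1$ and $\{\tau_1=n\}\subseteq\{n\in\tau\}$ forces $0\le f_n\le\P(n\in\tau)=u_n$, this rearranges to
\[
f_n=u_n-\sum_{k=1}^{n-1}f_ku_{n-k}.
\]
Write $\bar U:=U(1)=\sum_{n\ge0}u_n$; transience means precisely $F(1)<1$, i.e.\ $\bar U=1/\bigl(1-F(1)\bigr)<\infty$, and then $f_\infty=1-F(1)=1/\bar U$. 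The guiding heuristic is the ``one big jump'' principle: since $u_n$ is regularly varying, the convolution above is carried by the terms in which $k$ or $n-k$ is bounded. The small-$k$ part contributes $\approx(1-f_\infty)u_n$, and — positing $f_m\sim f_\infty^2u_m$ — the small-$(n-k)$ part contributes $\approx f_\infty^2u_n\sum_{\ell\ge1}u_\ell=f_\infty^2(\bar U-1)u_n$. Subtracting and using $f_\infty\bar U=1$ gives $f_n\sim\bigl[f_\infty-f_\infty^2(\bar U-1)\bigr]u_n=f_\infty^2u_n$, which is self-consistent.

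To make this rigorous I would set $c_+=\limsup_nf_n/u_n$ and $c_-=\liminf_nf_n/u_n$, both finite since $f_n\le u_n$, and bound the three ranges of the convolution. For a fixed cutoff $M$: regular variation gives $u_{n-k}\sim u_n$ for each fixed $k\le M$, so the small-$k$ range is asymptotically $u_n\sum_{k\le M}f_k$, while the small-$(n-k)$ range $\sum_{1\le\ell\le M}f_{n-\ell}u_\ell$ is trapped between $(c_-+o(1))u_n\sum_{1\le\ell\le M}u_\ell$ and $(c_++o(1))u_n\sum_{1\le\ell\le M}u_\ell$ (using the definitions of $c_\pm$ and $f_k\le u_k$); the middle range $M<k<n-M$ is bounded above by $\sum_{k=0}^nu_ku_{n-k}-2\sum_{k=0}^{M}u_ku_{n-k}$, which by the standard local subexponential estimate $\sum_{k=0}^nu_ku_{n-k}\sim2\bar Uu_n$ for regularly varying sequences is $\sim2u_n\sum_{k>M}u_k=o_M(u_n)$. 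Sending $n\to\infty$ and then $M\to\infty$ (so $\sum_{k\le M}f_k\to1-f_\infty$ and $\sum_{1\le\ell\le M}u_\ell\to\bar U-1$) yields
\[
c_+\le f_\infty-(\bar U-1)c_-,\qquad c_-\ge f_\infty-(\bar U-1)c_+.
\]
Together with $c_-\le c_+$ and the identity $1-(\bar U-1)^2=(2-\bar U)\bar U$, substituting each inequality into the other forces $c_+=c_-=f_\infty/\bar U=f_\infty^2$ whenever $\bar U<2$. In particular this already settles \cref{T_irr2}, since there $\bar U=U(1)=e^\lambda<2$ by \eqref{E_USN}.

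For a general transient $\tau$ one may have $\bar U\ge2$, and then these inequalities do not close. The route I would take then is to expand $1/U(s)$ geometrically: with $\tilde U(s)=U(s)-1=\sum_{n\ge1}u_ns^n$ and $c=\tilde U(1)=\bar U-1$,
\[
f_n=-[s^n]\frac{1}{U(s)}=\sum_{j\ge1}(-1)^{j+1}[s^n]\bigl(\tilde U(s)\bigr)^j,
\]
a finite sum for each $n$. For each fixed $j$, regular variation gives $[s^n]\bigl(\tilde U(s)\bigr)^j\sim jc^{j-1}u_n$, and the termwise limit $\sum_{j\ge1}(-1)^{j+1}jc^{j-1}=(1+c)^{-2}=\bar U^{-2}$ is again $f_\infty^2u_n$. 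I expect the main obstacle to be justifying the interchange of $\lim_n$ with $\sum_j$ when $c\ge1$: this needs a Kesten-type uniform bound $[s^n]\bigl(\tilde U(s)\bigr)^j\le K(\eps)\,j\,(c+\eps)^{j-1}u_n$ valid for all $n,j$, combined with a resummation of the alternating series (pairing consecutive terms, or truncating at $j\asymp\log n$ and controlling the remainder through the tail sum $\sum_{k>n/\log n}u_k$) to overcome the divergence of $\sum_jj(c+\eps)^j$. This is the one step where the full strength of regular variation — beyond summability and the estimate $\sum_{k=0}^nu_ku_{n-k}\sim2\bar Uu_n$ — is genuinely required.
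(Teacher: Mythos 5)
You are not really competing with a proof in the paper here: \cref{T_AB16} is imported verbatim from Alexander--Berger \cite{AB16}, whose short probabilistic proof (and the Banach-algebra alternatives via \cite{CNW73,DK11}) the paper merely cites. Judged on its own, the first half of your argument is correct and genuinely elementary: the renewal equation $u_n=\sum_{k=1}^n f_k u_{n-k}$, the bound $f_n\le u_n$, the three-range decomposition with the middle range controlled by $u_n^{*2}\sim 2\bar U u_n$ (which does hold for summable regularly varying sequences, e.g.\ via the Chover--Ney--Wainger theorem already quoted as \cref{T_CNW73}), and the bootstrap inequalities $c_+\le f_\infty-(\bar U-1)c_-$, $c_-\ge f_\infty-(\bar U-1)c_+$ do close to give $c_\pm=f_\infty^2$ when $\bar U<2$. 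Since $U(1)=e^\lambda\approx1.39<2$ by \eqref{E_USN}, this self-contained argument covers every use the paper makes of \cref{T_AB16}, which is a nice observation worth noting.

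However, as a proof of the theorem as stated (arbitrary transient $\tau$ with regularly varying $u_n$) there is a genuine gap, and it sits exactly where the real difficulty of the result lies. When $f_\infty\le 1/2$, i.e.\ $\bar U\ge2$, your two inequalities no longer force $c_+=c_-$ (the factor $1-(\bar U-1)$ has the wrong sign), and the fallback via $f_n=\sum_{j\ge1}(-1)^{j+1}[s^n]\bigl(\tilde U(s)\bigr)^j$ is only a sketch: the termwise asymptotics $[s^n]\tilde U^j\sim jc^{j-1}u_n$ are fine for fixed $j$, but the interchange of $\lim_n$ with $\sum_j$ is unjustified precisely because $c\ge1$. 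Kesten's bound only yields $[s^n]\tilde U^j\le K(\varepsilon)\bigl((1+\varepsilon)c\bigr)^{j-1}u_n$, which gives no summable majorant; pairing consecutive terms replaces $\tilde U^j$ by $\tilde U^j(1-\tilde U)$, whose coefficients have no controlled sign; and truncation at $j\asymp\log n$ leaves a remainder you have not bounded. Making any of these work essentially amounts to redoing the Wiener-lemma/Banach-algebra analysis of \cite{CNW73,DK11} or the last-renewal decomposition of \cite{AB16}. So the proposal proves \cref{T_AB16} only under the additional hypothesis $f_\infty>1/2$; for the general statement you should either complete the resummation argument in detail or simply cite \cite{AB16} as the paper does (or, more modestly, state and prove the $\bar U<2$ case, which suffices for \cref{T_irr2}).
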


As discussed in \cite{AB16}, 
this result can also be proved using techniques from 
Banach algebra \cite{CNW73,DK11}.
See \cite{AB16} for a short probabilistic proof.

\begin{proof}[Proof of \cref{T_irr2}]
By \cref{T_main,C_main}, the sequence $u_n$ is regularly varying with 
index $-5/2$. In fact, $n^{5/2}u_n$ converges. 
Therefore, applying \cref{T_AB16} and \eqref{E_USN}, it follows that 
\[
\frac{S_{n,1}}{S_n}=\frac{f_n}{u_n}
\to 
f_\infty^2
=\frac{1}{S(1/4)^2}
=e^{-2\lambda}, 
\]
and so by \cref{C_main} 
\[
\frac{n^{5/2}S_{n,1}}{4^n}
\to 
\frac{e^{-\lambda}}{2\sqrt{\pi}},
\]
as required. 
\end{proof}

\subsection{Limiting distribution}
Finally, we prove \cref{T_Snm}, which extends 
the result  of \cref{T_irr2}. To this end, we invoke the following special case
of a (much more general) result in \cite{CNW73}
(cf.\ \cite[Chapter IV]{AN72}). 

\begin{thm}[\hspace{1sp}{\cite[Theorem 1]{CNW73}}]
\label{T_CNW73}
Let $\{\mu_1,\mu_2,\ldots\}$ be a probability measure
for which $\mu_{n+1}/\mu_n\to 1$, as $n\to\infty$, and 
so that, for all sufficiently large $n$, 
\begin{equation}\label{E_supC}
\max_{k\le n/2} \frac{\mu_{n-k}}{\mu_n}\le C,  
\end{equation}
for some constant $C$. 
Then, for any $m\ge2$, 
we have that 
$\mu_n^{*m}/\mu_n\to m$, 
as $n\to\infty$. 
\end{thm}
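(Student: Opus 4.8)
The plan is to view the theorem as the assertion that the two hypotheses force $\mu$ into (a variant of) the class of subexponential laws, for which the convolution asymptotics $\mu^{*m}/\mu\to m$ are classical; concretely, I would prove the case $m=2$ from scratch and then induct on $m$, in each step writing the convolution as a one-index sum and cutting it into a \emph{head} (small index), a \emph{tail} (index near $n$), and a \emph{bulk} (both split-indices large). The head and tail are controlled by $\mu_{n+1}/\mu_n\to1$, which, being a finite product of consecutive ratios, gives $\mu_{n-j}/\mu_n\to1$ for every fixed $j$; the bulk is where \eqref{E_supC} does the essential work. Throughout, $\mu$ is a probability law on $\{1,2,\dots\}$, so $\mu_0=0$, $\sum_{k\ge1}\mu_k=1$, $\mu_n\to0$, and $\mu^{*m}$ is a probability law on $\{m,m+1,\dots\}$; we may assume $C\ge1$.

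\emph{Base case $m=2$.} For the lower bound, fix $K$ and retain in $\mu_n^{*2}=\sum_{k=1}^{n-1}\mu_k\mu_{n-k}$ only the indices in $\{1,\dots,K\}$ and in $\{n-K,\dots,n-1\}$, which are disjoint once $n>2K$; by symmetry this leaves $2\sum_{k=1}^{K}\mu_k\mu_{n-k}$, so $\liminf_n \mu_n^{*2}/\mu_n\ge 2\sum_{k=1}^{K}\mu_k$, and letting $K\to\infty$ gives $\liminf\ge2$. For the upper bound, note $\mu_n^{*2}\le 2\sum_{k=1}^{\lfloor n/2\rfloor}\mu_k\mu_{n-k}$ (folding the sum about its midpoint); splitting $\sum_{k\le n/2}\mu_k(\mu_{n-k}/\mu_n)$ at $K$, the part $k\le K$ tends to $\sum_{k\le K}\mu_k$ and the part $K<k\le n/2$ is at most $C\sum_{k>K}\mu_k$ by \eqref{E_supC}, so $\limsup_n \mu_n^{*2}/\mu_n\le 2\sum_{k\le K}\mu_k+2C\sum_{k>K}\mu_k$, and letting $K\to\infty$ gives $\limsup\le2$.

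\emph{Inductive step.} Suppose $\mu_n^{*m}/\mu_n\to m$; then $\mu_n^{*m}\le M\mu_n$ for all large $n$ and some $M$, and $\mu_{n-j}^{*m}/\mu_n\to m$ for every fixed $j$. Write $\mu_n^{*(m+1)}=\sum_{k=1}^{n-m}\mu_k\mu_{n-k}^{*m}=\sum_{\ell=m}^{n-1}\mu_\ell^{*m}\mu_{n-\ell}$. For the lower bound, keep the disjoint ranges $k\le K$ (contribution $\to m\sum_{k\le K}\mu_k$, using $\mu_{n-k}^{*m}/\mu_n\to m$) and $n-J\le k\le n-m$, i.e.\ $\ell=n-k\le J$ (contribution $\to\sum_{\ell\le J}\mu_\ell^{*m}$), so $\liminf\ge m+1$ after $K,J\to\infty$. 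For the upper bound, partition the $k$-sum into $k\le K$, $n-J<k\le n-m$, and the bulk $K<k\le n-J$; the first two contribute at most $m\sum_{k\le K}\mu_k$ and $\sum_{\ell<J}\mu_\ell^{*m}$ in the limit. For the bulk, split at $n/2$: for $K<k\le n/2$ use $\mu_{n-k}^{*m}\le M\mu_{n-k}$ (valid since $n-k$ is large) and $\mu_{n-k}/\mu_n\le C$, bounding this part by $MC\sum_{k>K}\mu_k$; for $n/2<k\le n-J$, put $\ell=n-k$ (so $J\le\ell\le n/2$) and use $\mu_{n-\ell}/\mu_n\le C$, bounding this part by $C\sum_{\ell\ge J}\mu_\ell^{*m}$. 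Hence $\limsup_n \mu_n^{*(m+1)}/\mu_n\le m\sum_{k\le K}\mu_k+\sum_{\ell<J}\mu_\ell^{*m}+MC\big(\sum_{k>K}\mu_k+\sum_{\ell\ge J}\mu_\ell^{*m}\big)$, which tends to $m+1$ as $K,J\to\infty$; combined with the matching $\liminf$, this closes the induction.

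\emph{Main obstacle.} The crux, both for $m=2$ and in the inductive step, is the bulk estimate --- showing that the part of the convolution where \emph{both} summand indices are large is negligible --- and this is precisely the point at which \eqref{E_supC} cannot be removed: $\mu_{n+1}/\mu_n\to1$ by itself does not force $\mu_n^{*2}/\mu_n\to2$. The remaining care is purely bookkeeping: checking that the head, tail, and bulk ranges are genuinely disjoint for large $n$, and applying \eqref{E_supC} in the form $\mu_{n-j}/\mu_n\le C$ for $j\le n/2$, which is how it acts on the upper half of the range after re-indexing. Notably, the induction needs nothing about $\mu^{*m}$ beyond the crude bound $\mu_n^{*m}\le M\mu_n$ and the limits $\mu_{n-j}^{*m}/\mu_n\to m$ for fixed $j$, both immediate from the inductive hypothesis together with $\mu_{n-j}/\mu_n\to1$; no separate long-tailedness lemma for $\mu^{*m}$ is required.
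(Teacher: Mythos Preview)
The paper does not prove this theorem; it is quoted from \cite{CNW73} as a black box and then applied in the proof of \cref{T_Snm}. There is therefore no in-paper argument to compare your proposal against.

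Your argument is correct and is essentially the classical subexponential route: the $m=2$ step is exactly the defining property $\mu_n^{*2}/\mu_n\to2$ of the subexponential class, obtained by the standard fold-and-truncate estimate, and the induction from $m$ to $m+1$ via the head/tail/bulk split is the usual mechanism (this is in fact how \cite{CNW73} proceeds in the discrete case, so you have effectively reconstructed the cited proof). One cosmetic point: in your final upper bound you attach the constant $MC$ to the term $\sum_{\ell\ge J}\mu_\ell^{*m}$, whereas your own estimate for the range $n/2<k\le n-J$ produced only $C$; this is harmless since one may take $M\ge1$, but it is worth tidying. Otherwise the bookkeeping (disjointness of the head and tail windows for large $n$, and the use of \eqref{E_supC} on the re-indexed upper half) is handled correctly.
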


\begin{proof}[Proof of \cref{T_Snm}]
Note that, by \eqref{E_RS2} and \eqref{E_USN}, 
\[
\mu_n=\frac{S_{n,1}}{4^n}\frac{1}{1-e^{-\lambda}},
\quad n\ge1, 
\]
is a probability measure. 
By \cref{T_irr2}, we have that 
$\mu_{n+1}/\mu_n\to 1$. 
Moreover, 
\[
\mu_n\sim 
\frac{1}{2\sqrt{\pi}}
\frac{e^{-\lambda}}{1-e^{-\lambda}}
\frac{1}{n^{5/2}},
\]
and so condition \eqref{E_supC} is clear. 
Hence, by \cref{T_CNW73}, 
\[
(1-e^{-\lambda})^{-(m-1)}
\frac{S_{n,m}}{S_{n,1}}
=\frac{\mu_n^{*m}}{\mu_n}
\to m.
\]
Therefore, by \cref{T_irr2}, 
\[
\frac{S_{n,m}}{S_{n}}\to m(1-e^{-\lambda})^{m-1}e^{-2\lambda},
\]
as claimed. 
\end{proof}

\appendix

\section{Technical estimates}\label{A_tech}

The following approximations are used in the
numerical estimates above.

\begin{lemma}\label{L_tech}
For all $n\ge10$, we have 
\begin{equation}\label{E_Nn}
\frac{1}{2\sqrt{\pi}}\frac{4^n}{n^{3/2}}\left(1-\frac{1}{4n}\right)
\le N_n \le \frac{1}{2\sqrt{\pi}}\frac{4^n}{n^{3/2}}
\end{equation}
and 
\begin{equation}\label{E_tailNn}
\frac{1}{3\sqrt{\pi}}\frac{1}{n^{3/2}}
\left(1-\frac{2}{n}\right)
\le
\sum_{k>n} \frac{N_k}{k4^k}\le \frac{1}{3\sqrt{\pi}}\frac{1}{n^{3/2}}.
\end{equation}
\end{lemma}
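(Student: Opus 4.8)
The plan is to extract sharp asymptotics from Alekseyev's formula \eqref{E_Ale08} by isolating the dominant term and bounding the rest. In the sum $N_n=\sum_{k=1}^n \frac{(-1)^{n+d}}{2n}\binom{2d}{d}$ with $d=\gcd(n,k)$, the largest binomial coefficient arises when $d=n$, i.e.\ $k=n$, contributing $\frac{1}{2n}\binom{2n}{n}$; every other term has $d\le n/2$, so $\binom{2d}{d}\le\binom{n}{n/2}$, which is exponentially smaller than $\binom{2n}{n}$. First I would write $N_n=\frac{1}{2n}\binom{2n}{n}+E_n$ where $E_n$ collects the terms with $d<n$. Since there are at most $n$ such terms, each bounded in absolute value by $\frac{1}{2n}\binom{2\lfloor n/2\rfloor}{\lfloor n/2\rfloor}=O(2^n/\sqrt n)$, we get $|E_n|=O(2^n)$, which is negligible against $\frac{1}{2n}\binom{2n}{n}=\Theta(4^n/n^{3/2})$. (One can sharpen this by noting the second-largest contribution comes from $d=n/2$ when $n$ is even, but for the stated bounds the crude estimate suffices.) Then I would invoke the standard Stirling estimate $\binom{2n}{n}=\frac{4^n}{\sqrt{\pi n}}\left(1-\frac{1}{8n}+O(n^{-2})\right)$, so that $\frac{1}{2n}\binom{2n}{n}=\frac{4^n}{2\sqrt\pi\, n^{3/2}}\left(1-\frac{1}{8n}+O(n^{-2})\right)$. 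Absorbing the exponentially small error $E_n$ and the $O(n^{-2})$ Stirling remainder into the interval $\left(1-\frac{1}{4n},\,1\right)$ for $n\ge10$ is then a routine numerical verification: the upper bound holds because the correction is strictly negative for large $n$, and the lower bound holds with room to spare since $\frac{1}{8n}<\frac{1}{4n}$. The threshold $n\ge10$ is chosen so that the implied constants in the $O$-terms do not overwhelm the gap; I would check the small cases $10\le n\le n_0$ directly against the tabulated values at \cite{A145855} and handle $n>n_0$ by the asymptotic bound.

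For \eqref{E_tailNn}, I would substitute the two-sided bound \eqref{E_Nn} into $\sum_{k>n}\frac{N_k}{k4^k}$ to reduce the problem to estimating $\sum_{k>n}\frac{1}{k^{5/2}}\left(1+O(k^{-1})\right)$ times $\frac{1}{2\sqrt\pi}$. The main term is $\frac{1}{2\sqrt\pi}\sum_{k>n}k^{-5/2}$, and here the key device is the integral comparison: since $x\mapsto x^{-5/2}$ is decreasing, $\int_{n+1}^\infty x^{-5/2}\,dx\le\sum_{k>n}k^{-5/2}\le\int_{n}^\infty x^{-5/2}\,dx=\frac{2}{3}n^{-3/2}$. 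Thus $\sum_{k>n}\frac{N_k}{k4^k}\le\frac{1}{2\sqrt\pi}\cdot\frac{2}{3}n^{-3/2}=\frac{1}{3\sqrt\pi}n^{-3/2}$, which is exactly the claimed upper bound (and conveniently the error from \eqref{E_Nn} only decreases $N_k$, so it strengthens rather than weakens the inequality). For the lower bound I would use $\int_{n+1}^\infty x^{-5/2}\,dx=\frac{2}{3}(n+1)^{-3/2}$ together with the lower bound $N_k\ge\frac{1}{2\sqrt\pi}\frac{4^k}{k^{3/2}}(1-\frac{1}{4k})$, and then verify that $\frac{2}{3}(n+1)^{-3/2}(1-\frac{1}{4n})\ge\frac{2}{3}n^{-3/2}(1-\frac{2}{n})$ for $n\ge10$; expanding $(1+1/n)^{-3/2}=1-\frac{3}{2n}+O(n^{-2})$ shows the left side is $\frac{2}{3}n^{-3/2}(1-\frac{3}{2n}-\frac{1}{4n}+O(n^{-2}))=\frac{2}{3}n^{-3/2}(1-\frac{7}{4n}+O(n^{-2}))$, which exceeds $\frac{2}{3}n^{-3/2}(1-\frac{2}{n})$ once $n$ is large, again finishing with a finite check of small cases.

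The main obstacle is bookkeeping the constants so that everything fits inside the stated intervals starting precisely at $n=10$ rather than some larger threshold: the Stirling remainder, the exponentially small $E_n$, and the discrepancy between $\sum k^{-5/2}$ and its integral bounds all need to be controlled simultaneously, and the coefficients $\frac{1}{4n}$, $\frac{2}{n}$ in the lemma are somewhat tight. I expect this to be handled by a combination of explicit Stirling bounds (e.g.\ $\sqrt{2\pi n}\,(n/e)^n e^{1/(12n+1)}<n!<\sqrt{2\pi n}\,(n/e)^n e^{1/(12n)}$) to get fully effective, non-asymptotic estimates, plus a short table of computed values for $10\le n\le 30$ or so to cover the range where asymptotics are not yet decisive. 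No deeper idea is needed; the content is entirely in making the elementary estimates quantitative.
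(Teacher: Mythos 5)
Your proposal is correct and follows essentially the same route as the paper: isolate the dominant $d=n$ term $\tfrac{1}{2n}\binom{2n}{n}$ in Alekseyev's formula, bound the remaining terms (exponentially smaller, of order $n\cdot 2^n$ relative to $2n N_n$), apply Stirling-type two-sided estimates for the central binomial coefficient, and then deduce the tail bounds by comparing $\sum_{k>n}k^{-5/2}$ (with the $1-\tfrac{1}{4k}$ correction) to the integrals $\int_n^\infty x^{-5/2}\,dx$ and $\int_{n+1}^\infty x^{-5/2}\,dx$. The only difference is presentational: the paper works throughout with explicit non-asymptotic bounds $\frac{4^n}{\sqrt{\pi n}}\left(1-\frac{1}{8n}\right)\le\binom{2n}{n}\le\frac{4^n}{\sqrt{\pi n}}\left(1-\frac{1}{9n}\right)$ so that $n\ge10$ falls out directly, where you defer some of that bookkeeping to effective Stirling bounds and a finite numerical check.
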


\begin{proof}
The central binomial coefficients satisfy
\[
\frac{4^n}{\sqrt{\pi n}}\left(1-\frac{1}{8n}\right)\le {2n\choose n}\le 
\frac{4^n}{\sqrt{\pi n}}\left(1-\frac{1}{9n}\right).
\]
Therefore, by \eqref{E_Ale08}, we have 
\begin{align*}
N_n&\le \frac{1}{2n}\sum_{k=1}^n{2d\choose d}\\
&\le\frac{1}{2n}\left({2n\choose n}+\frac{n}{2} {n\choose \lfloor n/2\rfloor}\right)\\
&\le \frac{1}{2\sqrt{\pi}}\frac{4^n}{n^{3/2}}\left(1-\frac{1}{9n}+\frac{n}{2^n}\right)\\
&\le \frac{1}{2\sqrt{\pi}}\frac{4^n}{n^{3/2}}
\end{align*}
for all $n\ge10$. Similarly, 
\[
N_n\ge
\frac{1}{2\sqrt{\pi}}\frac{4^n}{n^{3/2}}\left(1-\frac{1}{8n}-\frac{n}{2^n}\right)
\ge \frac{1}{2\sqrt{\pi}}\frac{4^n}{n^{3/2}}\left(1-\frac{1}{4n}\right)
\]
for all $n\ge10$, and \eqref{E_Nn} follows. 

Next, observe that, for $n\ge10$, we have by \eqref{E_Nn}
that 
\[
\sum_{k>n}\frac{N_k}{k4^k}\le \frac{1}{2\sqrt{\pi}}\int_n^\infty \frac{dx}{x^{5/2}}
=\frac{1}{3\sqrt{\pi}}\frac{1}{n^{3/2}}
\]
and 
\begin{align*}
\sum_{k>n}\frac{N_k}{k4^k}&\ge \frac{1}{2\sqrt{\pi}}\left(\int_{n+1}^\infty \frac{dx}{x^{5/2}}
-\frac{1}{4}\int_{n}^\infty \frac{dx}{x^{7/2}}\right)\\
&=\frac{1}{3\sqrt{\pi}}\frac{1}{n^{3/2}}\left(
\left(\frac{n}{n+1}\right)^{3/2}-\frac{3}{20n}\right)\\
&\ge \frac{1}{3\sqrt{\pi}}\frac{1}{n^{3/2}}
\left(1-\frac{2}{n}\right), 
\end{align*}
and so \eqref{E_tailNn} follows. 
\end{proof}

\providecommand{\bysame}{\leavevmode\hbox to3em{\hrulefill}\thinspace}
\providecommand{\MR}{\relax\ifhmode\unskip\space\fi MR }
\providecommand{\MRhref}[2]{%
  \href{http://www.ams.org/mathscinet-getitem?mr=#1}{#2}
}
\providecommand{\href}[2]{#2}

\end{document}